\newcommand{\doi}[1]{\href{https://doi.org/#1}{\url{doi:#1}}}
\newcommand{\myurl}[1]{\href{#1}{\url{#1}}}
\newtheorem{thm}{Theorem}[section]
\newtheorem{cor}[thm]{Corollary}
\newtheorem{lem}[thm]{Lemma}
\newtheorem{prop}[thm]{Proposition}
\theoremstyle{definition}
\newtheorem{example}[thm]{Example}
\newtheorem{test}[thm]{Test}
\newtheorem{rem}[thm]{Remark}
\newcommand{\bC}{\mathbb{C}}
\newcommand{\bR}{\mathbb{R}}
\newcommand{\bN}{\mathbb{N}}
\newcommand{\bNz}{\mathbb{N}_0}
\newcommand{\bone}{\boldsymbol{1}}
\newcommand{\cB}{\mathcal{B}}
\newcommand{\cC}{\mathcal{C}}
\newcommand{\cG}{\mathcal{G}}
\newcommand{\cF}{\mathcal{F}}
\newcommand{\cH}{\mathcal{H}}
\newcommand{\cM}{\mathcal{M}}
\newcommand{\cP}{\mathcal{P}}
\newcommand{\cS}{\mathcal{S}}
\newcommand{\ba}{\mathbf{a}}
\newcommand{\al}{\alpha}
\newcommand{\be}{\beta}
\newcommand{\ga}{\gamma}
\newcommand{\de}{\delta}
\newcommand{\La}{\Lambda}
\newcommand{\si}{\sigma}
\renewcommand{\phi}{\varphi}
\newcommand{\tmu}{\widetilde{\mu}}
\newcommand{\enumber}{\operatorname{e}}
\newcommand{\imunit}{\operatorname{i}}
\renewcommand{\Im}{\operatorname{Im}}
\newcommand{\dif}{\mathrm{d}}
\newcommand{\conjw}{\overline{w}}
\newcommand{\conj}[1]{\overline{#1}}
\def\multiset#1#2{\ensuremath{\left(\kern-.3em\left(\genfrac{}{}{0pt}{}{#1}{#2}\right)\kern-.3em\right)}}
\newcommand{\eqdef}{\coloneqq}
\definecolor{darkgreen}{rgb}{0, 0.5, 0}
\definecolor{lightgreen}{rgb}{0.7, 1, 0.7}
\title{Horizontal Fourier transform\\ of the polyanalytic Fock kernel}
\author{Erick Lee-Guzm\'an,
Egor A. Maximenko,\\
Gerardo Ramos-Vazquez,
Armando S\'anchez-Nungaray}
\begin{document}
\maketitle

\begin{abstract}
Let $n,m\ge 1$ and $\alpha>0$.
We denote by $\mathcal{F}_{\alpha,m}$ the $m$-analytic Bargmann--Segal--Fock space, i.e.,
the Hilbert space of all $m$-analytic functions defined on $\mathbb{C}^n$ and square integrables with respect to the Gaussian weight
$\exp(-\alpha |z|^2)$.
We study the von Neumann algebra $\mathcal{A}$ of bounded linear operators acting in $\mathcal{F}_{\alpha,m}$ and commuting with all ``horizontal'' Weyl translations,
i.e., Weyl unitary operators associated to the elements of $\mathbb{R}^n$.
The reproducing kernel of $\mathcal{F}_{1,m}$ was computed by Youssfi
[Polyanalytic reproducing kernels in $\mathbb{C}^n$,
Complex Anal. Synerg.,
2021, 7, 28].
Multiplying the elements of $\mathcal{F}_{\alpha,m}$ by an appropriate weight, we transform this space into another reproducing kernel Hilbert space whose kernel $K$ is invariant under horizontal translations.
Using the well-known Fourier connection between Laguerre and Hermite functions,
we compute the Fourier transform of $K$ in the ``horizontal direction'' 
and decompose it into the sum of $d$ products of Hermite functions,
with $d=\binom{n+m-1}{n}$.
Finally, applying the scheme proposed by Herrera-Ya\~{n}ez, Maximenko, Ramos-Vazquez
[Translation-invariant operators in reproducing kernel Hilbert spaces, Integr. Equ. Oper. Theory, 2022, 94, 31],
we show that $\mathcal{F}_{\alpha,m}$ is 
isometrically isomorphic to the space of vector-functions $L^2(\mathbb{R}^n)^d$,
and $\mathcal{A}$ is isometrically isomorphic to the algebra of matrix-functions
$L^\infty(\mathbb{R}^n)^{d\times d}$.

\medskip\noindent
\textbf{Keywords:}
polyanalytic function,
Bargmann--Segal--Fock space,
reproducing kernel,
Laguerre polynomial,
Hermite polynomial,
Fourier transform,
translation-invariant operators,
unitary representation.

\medskip\noindent
\textbf{Mathematics Subject Classification (2020):} 22D25, 46E22, 30G20, 30H20, 33C45, 42A38, 47B35, 47B32.
\end{abstract}

\bigskip
\subsection*{Funding}

The second author has been partially supported by Proyecto CONAHCYT ``Ciencia de Frontera''
FORDECYT-PRONACES/61517/2020
and by IPN-SIP projects (Instituto Polit\'{e}cnico Nacional, Mexico).\\
The third author has been supported by postdoctoral grant
(CONAHCYT, Mexico).\\
The fourth author has been supported by CONAHCYT grant 280732.

\clearpage

\tableofcontents

\bigskip

\section{Introduction}

\subsection*{Background}

The polyanalytic Bargmann--Segal--Fock spaces 
(shortly, polyanalytic Fock spaces)
are important for some problems of quantum mechanics~\cite{AbreuFeichtinger,Balk1991,KellerLuef2021,MouaynMoize2021,Shigekawa1987} and have connection with signal processing~\cite{Abreu2009,Abreu2012,HutnikHutnikova2014}.

Vasilevski~\cite{Vasilevski2000polyFock} studied the structure of the polyanalytic Fock space,
which we denote by
$\cF_{1,m}(\bC)$,
by ``cancelling'' the weight (see Remark~\ref{rem:Vasilevski_flatten_Fock})
and applying the Fourier transform in the imaginary direction.
Thereby, he constructed an isometric isomorphism between $\cF_{1,m}(\bC)$
and $L^2(\bR)^m$.
S\'anchez-Nungaray, Gonz\'alez-Flores,  L\'opez-Mart\'inez,
and Arroyo-Neri~\cite{SanchezGonzalezLopezArroyo2018} developed this idea and showed that the Toeplitz operators, acting in $\cF_{1,m}(\bC)$ and generated by bounded horizontal symbols,
can be converted into matrix families.

In the $n$-dimensional case,
it is possible to define $\be$-analytic functions for each multiindex $\be\in\bN^n$;
see, e.g., Balk~\cite[Section~6.4]{Balk1991} and Vasilevski~\cite{Vasilevski2023}.
In this paper, we prefer to work in spaces of ``homogeneously $m$-analytic functions'',
where $m\in\bN$.
These spaces are studied
by Leal-Pacheco,
Maximenko,
Ramos-Vazquez~\cite{LealMaximenkoRamos2021} and
Youssfi~\cite{Youssfi2021}.
In particular, we denote by $\cF_{\al,m}(\bC^n)$ or shortly by $\cF_{\al,m}$ the $m$-analytic Fock space,
see details in Section~\ref{sec:poly_Fock_space}.

Arroyo Neri,
S\'anchez-Nungaray,
Hern\'andez Marroquin,
and
L\'opez-Mart\'inez
\cite{ArroyoSanchezHernandezLopez2021}
generalized ideas from~\cite{SanchezGonzalezLopezArroyo2018} to the $n$-dimensional case.
They worked in polyanalytic and true-polyanalytic Fock spaces, including a space denoted by $F_{k}^2(\bC^n)$ in~\cite{SanchezGonzalezLopezArroyo2018}, which was defined as a certain direct sum of true-poly-Fock spaces.
As we show in Section~\ref{sec:true_poly_Fock},
$F_k^2(\bC^n)$
coincides with 
$\cF_{\al,m}(\bC^n)$
for $\al=1$ and $m=k+1-n$.
The authors of~\cite{SanchezGonzalezLopezArroyo2018} adapted for several variables the scheme introduced by Vasilevski~\cite{Vasilevski2000polyFock} and concluded that $\cF_{k}^{2}(\bC^n)$
is isometrically isomorphic to
$L^2(\bR^n)^d$,
with $d=\binom{k}{k-n}$.
Using that isomorphism,
they characterized the $C^*$-algebra generated by Toeplitz operators with extended horizontal symbols as the 
$C^*$-algebra of matrix-valued functions with limit in each ray to infinity.

In this paper,
we employ a different aproach.
We pass from $\cF_{\al,m}$ to its  ``flattened'' version $\cH_m$,
which is embedded into $L^2(\bR^{2n})$
and is invariant under the horizontal translations.
We compute the horizontal Fourier transform of the reproducing kernel of $\cH_m$ and describe the W*-algebra of operators invariant under the horizontal Weyl operators in $\cF_{\al,m}$ or, equivalently, the W*-algebra of the operators invariant under the horizontal translations in $\cH_m$.

\subsection*{Main result}

Given a Hilbert space $H$ and an autoadjoint subset $X$ of $\cB(H)$,
we denote by $\cC(X)$ the centralizer (commutant) of $X$,
which is the set of all operators in $\cB(H)$ that commute with every operator in $X$.
It is well known that $\cC(X)$ is a von Neumann
subalgebra of $\cB(H)$.
Given a unitary representation $(\rho,H)$ of a group $G$,
we denote by $\cC(\rho)$ the centralizer of the set $\{\rho(a)\colon\ a\in G\}$.

In this paper, we consider the unitary representation of $\bR^n$ in the space $\cF_{\al,m}$, given by the ``horizontal Weyl operators''
\[
(\rho_{\cF_{\al,m}}(a) f)(z)
\eqdef
f(z-a)\,
\enumber^{\al\langle z,a\rangle-\frac{\al}{2}|a|^2}
\qquad(a\in\bR^n,\ f\in\cF_{\al,m},\ z\in\bC^n).
\]
Our main result is an explicit decomposition of
$\cC(\rho_{\cF_{\al,m}})$:
\begin{equation}
\label{eq:isomorphic_algebras}
\cC(\rho_{\cF_{\al,m}})
\cong
\int_{\bR^n}^{\oplus} \cM_d\,\dif{}\mu_n(\xi)
= L^\infty(\bR^n)\otimes\cM_d
= L^\infty(\bR^n,\cM_d)
= L^\infty(\bR^n)^{d\times d}.
\end{equation}
Here $\cM_d$ is the algebra of $d\times d$ complex matrices.

\subsection*{Structure of the paper}

In Section~\ref{sec:Laguerre_summation_formula}, we state a formula for a sum of products of Laguerre polynomials.
In Section~\ref{sec:poly_Fock_space}, we recall the formula found by Youssfi~\cite{Youssfi2021} for the reproducing kernel of $\cF_{\al,m}$ with $\al=1$ and
pass from $\al=1$ to the general $\al>0$.
Using the formula from Section~\ref{sec:Laguerre_summation_formula}, we decompose this reproducing kernel to a certain sum of products, where each factor depends only on one coordinate.
In Section~\ref{sec:true_poly_Fock}, we show that this decomposition of the kernel is equivalent to the descomposition of $\cF_{\al,m}$ previously found by 
Arroyo Neri,
Hern\'{a}ndez Marroquin,
L\'{o}pez-Mart\'{i}nez,
and S\'{a}nchez-Nungaray
in~\cite{ArroyoSanchezHernandezLopez2021}.
In Section~\ref{sec:flattened_poly_Fock}, we construct an isometric isomorphism
$U_{\cF_{\al,m}}^{\cH_m}$
which multiplies the functions from $\cF_{\al,m}$ by a certain weight.
The resulting Hilbert space $\cH_m$ is an RKHS embedded into
$L^2(\bR^{2n},\tmu_{2n})$,
where $\tmu_{n}=(2\pi)^{-n/2}\mu_{n}$.
In Section~\ref{sec:Weyl},
we introduce horizontal Weyl translations in $\cF_{\al,m}$ and show that $U_{\cF_{\al,m}}^{\cH_m}$ intertwines them with unweighted horizontal translations in $\cH_m$.

In Section~\ref{sec:Fourier_connection_Laguerre_Hermite}, we recall a well-known formula for the Fourier transform of the function $u\mapsto\ell_j(u^2+a^2)$, where $\ell_j$ is the $j$th Laguerre function.
In Section~\ref{sec:Fourier_transform_of_reproducing_kernel}, using the formula from Section~\ref{sec:Fourier_connection_Laguerre_Hermite},
we compute the ``horizontal Fourier transform'' $F\otimes I$ of the reproducing kernel $K^{\cH_m}_{0,y}$,
and decompose the result into a sum of products:
\begin{equation}
\label{eq:F_times_I_K_eq_sum_q_q}
((F\otimes I)K^{\cH_m}_{0,y})(\xi,v)
=\sum_{\substack{k\in\bNz^n\\|k|\le m-1}}
q_{k,\xi}(v)
\overline{q_{k,\xi}(y)}.
\end{equation}
Using~\eqref{eq:F_times_I_K_eq_sum_q_q} and the scheme from~\cite{HerreraMaximenkoRamos2022},
in Section~\ref{sec:decomposition_of_spaces} we construct an isometric isomorphism $R_{\cH_m}$ between $\cH_m$ and $L^2(\bR)^d$,
where
\[
d = \#\{k\in\bNz^n\colon\ |k|\le m-1\}
= \binom{n+m-1}{n}.
\]
According to the scheme from~\cite{HerreraMaximenkoRamos2022},
$R_{\cH_m}$ intertwines translation-invariant operators in $\cH_m$ and multiplication operators (by matrix-functions) in $L^2(\bR,\tmu_n)^d$.
Thereby, we obtain~\eqref{eq:isomorphic_algebras}.
Finally, in Section~\ref{sec:tests} we describe numerical tests that verify many formulas from this paper for small values of $n$ and $m$.

\subsection*{Novelty}

This paper is inspired by~\cite{Vasilevski2000polyFock,SanchezGonzalezLopezArroyo2018,ArroyoSanchezHernandezLopez2021}
and contains similar ideas.
Let us emphasize some new aspects of this paper.
\begin{itemize}
\item We consider not only Toeplitz operators with symbols invariant under horizontal translations, but all bounded operators commuting with $\rho_{\cF_{m,\al}}(a)$ for all $a$ in $\bR^n$.

\item Instead of decomposing the whole poly-Fock space into ``true'' poly-Fock subspaces, 
we decompose the reproducing kernel into a sum of products.

\item Instead of applying the Fourier transform to differential equations, we apply it to the reproducing kernel.

\item Working with reproducing kernels allows us to construct several new examples and verify many formulas of this paper by calculations in SageMath.

\end{itemize}

\section{Summation formulas for Laguerre polynomials}
\label{sec:Laguerre_summation_formula}

We employ the usual notation $L_n^{(\al)}$ for the generalized Laguerre(--Sonin) polynomials.
The following formula for their generating function is well known
(see, e.g., \cite[formula (5.1.9)]{Szego1975}):
\begin{equation}
\label{eq:Laguerre_generating_function}
\sum_{p=0}^\infty
L_p^{(\al)}(x)\,t^p
=
\frac{1}{(1-t)^{\al+1}}
\exp\left(-\frac{tx}{1-t}\right).
\end{equation}

\begin{prop}
\label{prop:Laguerre_of_sum}
For every $\al,\be>-1$,
every $p$ in $\bNz$,
and every $x,y$ in $\bC$,
\begin{equation}
\label{eq:Laguerre_of_sum}
L_p^{(\al+\be+1)}(x+y) 
= \sum_{k=0}^p
L_k^{(\al)}(x)\,
L_{p-k}^{(\be)}(y).
\end{equation}
\end{prop}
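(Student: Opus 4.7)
The plan is to prove the identity by multiplying two instances of the generating function~\eqref{eq:Laguerre_generating_function} and comparing the coefficients of $t^p$ on both sides.

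First, I would apply~\eqref{eq:Laguerre_generating_function} to the left-hand side with parameter $\al+\be+1$ and argument $x+y$:
\[
\sum_{p=0}^\infty L_p^{(\al+\be+1)}(x+y)\,t^p
= \frac{1}{(1-t)^{\al+\be+2}}\exp\!\left(-\frac{t(x+y)}{1-t}\right).
\]
The key observation is that the right-hand side factors cleanly as a product of two generating functions, since the exponent $(\al+\be+2)$ splits as $(\al+1)+(\be+1)$ and the exponential of a sum is a product of exponentials:
\[
\frac{1}{(1-t)^{\al+\be+2}}\exp\!\left(-\frac{t(x+y)}{1-t}\right)
= \frac{1}{(1-t)^{\al+1}}\exp\!\left(-\frac{tx}{1-t}\right)\cdot
\frac{1}{(1-t)^{\be+1}}\exp\!\left(-\frac{ty}{1-t}\right).
\]

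Next, I would rewrite each factor via~\eqref{eq:Laguerre_generating_function} and take the Cauchy product:
\[
\left(\sum_{k=0}^\infty L_k^{(\al)}(x)\,t^k\right)\!\left(\sum_{j=0}^\infty L_j^{(\be)}(y)\,t^j\right)
=\sum_{p=0}^\infty\left(\sum_{k=0}^p L_k^{(\al)}(x)\,L_{p-k}^{(\be)}(y)\right)t^p.
\]
Matching the coefficient of $t^p$ with that of the original left-hand-side series yields~\eqref{eq:Laguerre_of_sum}.

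There is no genuine obstacle here; the only subtlety is to ensure the equality of formal power series is valid as an identity of polynomials in $x,y$. Since the coefficients on both sides are polynomials in $x$ and $y$ of degree $\le p$, equality as formal power series in $t$ over $\bC[x,y]$ (or, equivalently, as convergent series for sufficiently small $|t|$ and fixed $x,y\in\bC$) already implies the polynomial identity for all $x,y\in\bC$, so the restriction $\al,\be>-1$ is not actually needed for the identity itself; it is only inherited from the standard statement of the generating function.
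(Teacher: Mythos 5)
Your proof is correct and follows essentially the same route as the paper: both factor the generating function~\eqref{eq:Laguerre_generating_function} for parameter $\al+\be+1$ at $x+y$ into the product of the two generating functions for $\al$ at $x$ and $\be$ at $y$, then equate coefficients of $t^p$. Your closing remark on why the formal-power-series identity suffices (and why $\al,\be>-1$ is not really needed) is a small additional justification the paper leaves implicit, but the argument is the same.
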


\begin{proof}
This fact is mentioned without proof in~\cite[Section~10.12, formula~(41)]{Erdelyi1953} and
\cite[???]{Howlett1966}.
It follows easily from~\eqref{eq:Laguerre_generating_function}:
\begin{align*}
\sum_{p=0}^\infty
L_p^{(\al+\be+1)}(x+y)\,t^p
&=
\frac{1}{(1-t)^{\al+\be+2}}
\exp\left(-\frac{t(x+y)}{1-t}\right)
\\
&=
\frac{1}{(1-t)^{\al+1}}
\exp\left(-\frac{tx}{1-t}\right)
\frac{1}{(1-t)^{\be+1}}
\exp\left(-\frac{ty}{1-t}\right)
\\
&=
\left(
\sum_{k=0}^\infty
L_k^{(\al)}(x)\,t^k
\right)
\left(
\sum_{j=0}^\infty
L_j^{(\be)}(y)\,t^j
\right).
\end{align*}
Equating the coefficients of $t^p$ we obtain~\eqref{eq:Laguerre_of_sum}.
\end{proof}

For $\be=0$, \eqref{eq:Laguerre_of_sum} reduces to
\begin{equation}
\label{eq:sum_laguerre}
\sum_{k=0}^p L_k^{(\al)}(x)
=L_p^{(\al+1)}(x).
\end{equation}
The main result of this section, Proposition~\ref{prop:Laguerre_decomposition},
will be a generalization of Proposition~\ref{prop:Laguerre_of_sum}.

In what follows,
given $n,m$ in $\bN$,
we denote by $J_{n,m}$ the following set of multiindices:
\begin{equation}
\label{eq:J_def}
J_{n,m}
\eqdef
\bigl\{k\in\bNz^n\colon\ |k|\le m-1\bigr\}.
\end{equation}
Obviously, there is a bijection from $J_{n,m}$ onto 
\[
\{(k_1,\ldots,k_{n+1})\in\bNz^{n+1}\colon\ k_1+\ldots+k_{n+1}=m-1\}.
\]
Hence, the size of $J_{n,m}$ can be expressed through a certain multiset coefficient.
We denote the size of $J_{n,m}$ by $d_{n,m}$:
\begin{equation}
\label{eq:J_cardinality}
d_{n,m}
\eqdef \#J_{n,m}
=\multiset{m-1}{n+1}
=\binom{n+m-1}{m-1}
=\binom{n+m-1}{n}
=\frac{(n+m-1)!}{n!\,(m-1)!}.
\end{equation}
Another way to prove \eqref{eq:J_cardinality} is to represent $J_{n,m}$ as
$J_{n,m} = \bigcup_{j=0}^{m-1} \{k\in\bNz^n\colon |k|=j\}$.

\begin{prop}
\label{prop:Laguerre_decomposition}
For every $n$ in $\bN$,
every $p$ in $\bNz$,
and every $t=(t_1,\ldots,t_n)$ in $\bC^n$,
\begin{equation}\label{eq:Laguerre_decomposition}
L_p^{(n)}(t_1 + \cdots + t_n)
=\sum_{k\in J_{n,p+1}}\;
\prod_{r=1}^n L_{k_r}(t_r).
\end{equation}
The number of summands in the sum is $\binom{n+p}{n}$.
\end{prop}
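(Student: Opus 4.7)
The plan is to prove \eqref{eq:Laguerre_decomposition} by induction on $n$, using Proposition~\ref{prop:Laguerre_of_sum} as the key recursion tool. The base case $n=1$ reduces to $L_p^{(1)}(t_1) = \sum_{k_1=0}^{p} L_{k_1}(t_1)$, which is exactly formula~\eqref{eq:sum_laguerre} specialized to $\al=0$.

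For the inductive step, assume the formula holds for $n$ and all $p$, and consider $L_p^{(n+1)}(t_1+\cdots+t_{n+1})$. Splitting the argument as $(t_1+\cdots+t_n) + t_{n+1}$ and applying Proposition~\ref{prop:Laguerre_of_sum} with $\al=n$ and $\be=0$ (so that $\al+\be+1=n+1$), I would obtain
\[
L_p^{(n+1)}(t_1+\cdots+t_{n+1})
= \sum_{k=0}^{p} L_k^{(n)}(t_1+\cdots+t_n)\, L_{p-k}(t_{n+1}).
\]
Next, I would apply the induction hypothesis to expand each $L_k^{(n)}(t_1+\cdots+t_n)$ as a sum over $J_{n,k+1}$, and change variables by setting $k_{n+1} \eqdef p-k$. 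Then the joint conditions $k_1+\cdots+k_n\le k$ and $k_{n+1}=p-k$ collapse to the single condition $k_1+\cdots+k_{n+1}\le p$, i.e., $(k_1,\ldots,k_{n+1})\in J_{n+1,p+1}$, which is precisely what is needed.

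The count $\binom{n+p}{n}$ of summands follows directly from~\eqref{eq:J_cardinality} with $m=p+1$. There is no real conceptual obstacle; the only mildly delicate step is the index bookkeeping when combining the two nested sums into a single sum over $J_{n+1,p+1}$. As a sanity check, I would also sketch an alternative proof through generating functions: comparing $\sum_p L_p^{(n)}(t_1+\cdots+t_n)\,s^p$, computed via~\eqref{eq:Laguerre_generating_function}, with $\sum_p s^p \sum_{k\in J_{n,p+1}} \prod_r L_{k_r}(t_r)$, which by reversing the order of summation equals $\frac{1}{1-s}\prod_{r=1}^n \sum_{k_r\ge 0} L_{k_r}(t_r)\, s^{k_r}$; the extra factor $\frac{1}{1-s}$ encodes the inequality $|k|\le p$ (as opposed to $|k|=p$) and matches the exponent $n+1$ in $(1-s)^{-(n+1)}$ on the other side.
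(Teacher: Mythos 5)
Your proposal is correct and follows essentially the same route as the paper: induction on $n$ with base case \eqref{eq:sum_laguerre} (at $\al=0$) and inductive step via Proposition~\ref{prop:Laguerre_of_sum} with $\al=n$, $\be=0$, followed by the same index relabeling $k_{n+1}=p-k$ that collapses the nested sums into a single sum over $J_{n+1,p+1}$. The generating-function sanity check is a nice extra but does not change the substance.
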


\begin{proof}
We will proceed by mathematical induction over $n$.
For every $n$ in $\bN$,
we denote by $\cP(n)$ the statement that~\eqref{eq:Laguerre_decomposition} holds for all $p$ in $\bNz$ and all $t=(t_1,\ldots,t_n)$ in $\bC^n$.

For $n=1$, ~\eqref{eq:Laguerre_decomposition} converts to~\eqref{eq:sum_laguerre} with $\al=0$.
Therefore, $\cP(1)$ is true.

Suppose that $n$ in $\bN$ and $\cP(n)$ holds. Let us prove that $\cP(n+1)$ holds.
Let $d\in\bNz$, $t=(t_1,\ldots,t_{n+1})\in\bC^{n+1}$,
and $u\eqdef t_1+\ldots+t_n+t_{n+1}$.
Apply~\eqref{eq:Laguerre_of_sum}
with $\al=n$, $\be=0$,
$x=t_1+\cdots+t_n$,
and $y=t_{n+1}$:
\[
L_d^{(n+1)}(u)
=\sum_{j=0}^d
L_{d-j}^{(n)}(t_1+\cdots+t_n)
L_j(t_{n+1}).
\]
In the term with index $j$,
we apply the induction hypothesis $\cP(n)$ with parameters $d-j$ and $(t_1,\ldots,t_n)$.
After that, we denote $j$ by $k_{n+1}$.
\[
L_d^{(n+1)}(u)
=\sum_{j=0}^d
\Biggl(\,\sum_{\substack{k\in\bNz^n\\|k|\le d-j}}
\prod_{r=1}^n
L_{k_r}(t_r)
\Biggr)
L_j(t_{n+1})
=\sum_{\substack{(k_1,\ldots,k_{n+1})\in\bNz^{n+1}\\k_1+\cdots+k_n+k_{n+1}\le d}}
\prod_{r=1}^{n+1}
L_{k_r}(t_r).
\]
Thus, $\cP(n+1)$ is proven.
\end{proof}

\section{Polyanalytic Bargmann--Segal--Fock space}
\label{sec:poly_Fock_space}

Let $\bN\eqdef\{1,2,\ldots\}$, $\bNz\eqdef\{0\}\cup\bN=\{0,1,2,\ldots\}$.
Let $n,m\in\bN\eqdef\{1,2,\ldots\}$.
For a multiindex $k$ in $\bNz^n$, we use the standard notation
\[
|k|\eqdef\sum_{r=1}^n k_r,\qquad
k!\eqdef\prod_{r=1}^n k_r!.
\]
Following~\cite{HerreraMaximenkoRamos2022},
we say that a smooth function $f\colon\bC^n\to\bC$ is $m$-analytic (or homogeneously $m$-analytic),
if $\overline{D}^k f=0$ for every multiindex $k$ with $|k|=m$.
Here $\overline{D}^k$ is the usual Wirtinger operator. It was proven in~\cite{LealMaximenkoRamos2021} that every $m$-analytic function admits a decomposition of the form
\begin{equation}
\label{eq:polyanalytic_functions_decomposition}
f(z) = \sum_{\substack{k\in\bNz^n\\|k|\le m-1}} a_k(z)\conj{z}^k,
\end{equation}
where $a_k\colon\bC^n\to\bC$ are some analytic functions.

We denote by $\langle\cdot,\cdot\rangle$ the inner product in $\bC^n$ and by $|\cdot|$ the euclidian norm in $\bC^n$:
\[
\langle z,w\rangle
\eqdef\sum_{r=1}^n z_r\,\overline{w_r},\qquad
|z|=\sqrt{\langle z,z\rangle}.
\]
Let $m\in\bN$, $\al>0$.
We denote by $\cF_{\al,m}(\bC^n)$ (or simply  by $\cF_{\al,m}$) the space of all $m$-analytic functions $\bC^n\to\bC$, square integrable with respect to the Gaussian weight $\frac{\al^n}{\pi^n}e^{-\alpha|z|^2}$. The norm on $\cF_{\al,m}$ is defined by
\[
\|f\|_{\cF_{\al,m}}
\eqdef
\left(\frac{\al^n}{\pi^n}
\int_{\bC^n} |f(z)|^2
\enumber^{-\al|z|^2}\,\dif\mu_{2n}(z)\right)^{1/2},
\]
where $\mu_{2n}$ is the Lebesgue measure on $\bC^n$.
This space is known as the ($n$-dimensional) $m$-analytic Bargmann--Segal--Fock space.

Askour, Intisar, and Mouyayn~\cite{AskourIntissarMouayn1997}
computed the reproducing kernel of this space for $n=1$ and $\al=1$:
\begin{equation}
\label{eq:K_Fm_onedimensional}
K_z^{\cF_{1,m}(\bC)}(w)
=\enumber^{\overline{z}w}L_{m-1}^{(1)}\left(|w-z|^2\right).
\end{equation}
Recently, Youssfi~\cite{Youssfi2021} computed the reproducing kernel of $\cF_{1,m}(\bC^n)$:
\begin{equation}
\label{eq:K_Fm}
K_z^{\cF_{1,m}(\bC^n)}(w)
= \enumber^{\langle w,z\rangle}
L_{m-1}^{(n)}(|w-z|^2).
\end{equation}
In fact, the normalization of 
the Gaussian weight in~\cite{Youssfi2021} is different from the usual one that we accept in this paper.
So,~\eqref{eq:K_Fm} is a trivial adjustment of the formula that appears in~\cite{Youssfi2021}.

We define
$U_{\cF_{1,m}}^{\cF_{\al,m}}\colon
\cF_{1,m}\rightarrow\cF_{\al,m}$ by
\[
(U_{\cF_{1,m}}^{\cF_{\al,m}}f)(z)\eqdef f(\sqrt{\al}z).
\]

\begin{prop}
\label{prop:Fm_rightarrow_Fmalpha}
$U_{\cF_{1,m}}^{\cF_{\al,m}}$ is a well-defined isometric isomorphism of Hilbert spaces.
\end{prop}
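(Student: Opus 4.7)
The plan is to verify the three ingredients of ``isometric isomorphism'' in order: (i) $m$-analyticity is preserved, (ii) the norm is preserved, (iii) the map is bijective.

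First, I would check that $U_{\cF_{1,m}}^{\cF_{\al,m}}f$ is indeed $m$-analytic whenever $f$ is. Writing $w = \sqrt{\al}z$, the chain rule for the Wirtinger operator gives $\partial/\partial\overline{z}_r = \sqrt{\al}\,\partial/\partial\overline{w}_r$, hence iterating one finds
\[
\overline{D}^k\bigl(f(\sqrt{\al}\cdot)\bigr)(z)
= \al^{|k|/2}\,(\overline{D}^k f)(\sqrt{\al}z).
\]
If $|k|=m$ and $\overline{D}^k f=0$ identically, then the right-hand side vanishes, so $U_{\cF_{1,m}}^{\cF_{\al,m}}f$ is $m$-analytic on $\bC^n$.

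Second, I would check the isometry by a straightforward change of variables. Substituting $w=\sqrt{\al}z$ in the defining integral (so that $|w|^2=\al|z|^2$ and $\dif\mu_{2n}(w)=\al^n\,\dif\mu_{2n}(z)$) yields
\[
\|U_{\cF_{1,m}}^{\cF_{\al,m}}f\|_{\cF_{\al,m}}^2
=\frac{\al^n}{\pi^n}\int_{\bC^n}|f(\sqrt{\al}z)|^2\enumber^{-\al|z|^2}\,\dif\mu_{2n}(z)
=\frac{1}{\pi^n}\int_{\bC^n}|f(w)|^2\enumber^{-|w|^2}\,\dif\mu_{2n}(w)
=\|f\|_{\cF_{1,m}}^2.
\]
In particular, $U_{\cF_{1,m}}^{\cF_{\al,m}}f$ belongs to $\cF_{\al,m}$, showing that the map is well defined.

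Finally, for surjectivity I would exhibit the inverse: the operator $V\colon \cF_{\al,m}\to\cF_{1,m}$ defined by $(Vg)(z)\eqdef g(z/\sqrt{\al})$ is $m$-analyticity-preserving and isometric by the same two arguments (with $1/\sqrt{\al}$ in place of $\sqrt{\al}$), and obviously $V\circ U_{\cF_{1,m}}^{\cF_{\al,m}}=I_{\cF_{1,m}}$ and $U_{\cF_{1,m}}^{\cF_{\al,m}}\circ V=I_{\cF_{\al,m}}$. I do not anticipate any serious obstacle; the only mildly delicate point is to make the scaling factor $\al^{|k|/2}$ in the chain rule explicit so that preservation of $m$-analyticity is transparent, and to track the Jacobian $\al^n$ in the change of variables against the normalization $\al^n/\pi^n$ so that the two factors of $\al^n$ cancel.
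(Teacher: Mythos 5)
Your proposal is correct and its core computation (the change of variables $w=\sqrt{\al}z$, with the Jacobian $\al^{-n}$ cancelling the normalization $\al^n/\pi^n$) is exactly the paper's proof. The additional checks you supply — that the Wirtinger chain rule gives $\overline{D}^k(f(\sqrt{\al}\,\cdot))=\al^{|k|/2}(\overline{D}^kf)(\sqrt{\al}\,\cdot)$ so $m$-analyticity is preserved, and that $g\mapsto g(\cdot/\sqrt{\al})$ is the inverse — are routine details the paper leaves implicit, and they are correct.
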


\begin{proof}
We use the change of variable $w=\sqrt{\al}z$ in the integral:
\begin{align*}
\|U_{\cF_{1,m}}^{\cF_{\al,m}}f\|^2_{\cF_{\al,m}}
&=\frac{\al^n}{\pi^n}
\int_{\bC^n}\left|f(\sqrt{\al}z)\right|^2
\enumber^{-\al|z|^2}\,\dif\mu_{2n}(z)
=\frac{\al^n}{\pi^n}
\int_{\bC^n}|f(w)|^2
\enumber^{-|w|^2}\frac{\dif\mu_{2n}(w)}{\al^n}
\\
&=\frac{1}{\pi^n}
\int_{\bC^n}
|f(w)|^2
\enumber^{-|w|^2}\dif\mu_{2n}(z)
=\|f\|^2_{\cF_{1,m}}.
\qedhere
\end{align*}
\end{proof}

The following elementary result was proven in~\cite{LealMaximenkoRamos2021}; see also
\cite[Sections~5.6 and 5.7]{PaulsenRaghupathi2016}.

\begin{prop}
\label{prop:pushforward_RK}
Let $X,Y$ be non-empty sets,
$\psi\colon Y\to X$
and $J\colon Y\to\bC$
be some functions,
$H_1$ be a Hilbert space of functions over $X$
with reproducing kernel $(K_x^{H_1})_{x\in X}$,
$H_2$ be a Hilbert space of functions over $Y$, and
\[
(Uf)(z)\eqdef J(z)f(\psi(z))
\]
be a well-defined isometric isomorphism mapping $H_1$ onto $H_2$.
Then $H_2$ is an RKHS,
and its reproducing kernel
$(K^{H_2}_u)_{u\in Y}$ is given by
\[
K^{H_2}_u(v)
=\overline{J(u)}\,J(v)\,
K^{H_1}_{\psi(u)}(\psi(v)).
\]
\end{prop}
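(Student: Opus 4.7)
The plan is to verify two things in sequence: first, that $H_2$ has bounded point evaluations (so that it is an RKHS at all), and second, that its reproducing kernel admits the claimed closed form. Both reduce to transporting the reproducing property of $H_1$ through the isometric isomorphism $U$.

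For the first step, I would fix $v\in Y$ and observe that the evaluation functional $g\mapsto g(v)$ on $H_2$ is the composition of $U^{-1}\colon H_2\to H_1$ with the map $f\mapsto J(v)f(\psi(v))$. Since $U^{-1}$ is bounded (in fact an isometry) and since $f\mapsto f(\psi(v))$ is bounded on $H_1$ (because $H_1$ is an RKHS), the evaluation at $v$ on $H_2$ is a bounded linear functional. Thus $H_2$ is an RKHS.

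For the second step, I would compute $K^{H_2}_v$ from the reproducing property of $H_1$. Given an arbitrary $g\in H_2$, write $g=Uf$ with $f=U^{-1}g\in H_1$. Then
\[
g(v) = J(v)\,f(\psi(v))
= J(v)\,\langle f,K^{H_1}_{\psi(v)}\rangle_{H_1}
= \langle f,\overline{J(v)}\,K^{H_1}_{\psi(v)}\rangle_{H_1},
\]
using that the inner product is conjugate-linear in the second argument. Applying the isometry of $U$ then yields
\[
g(v)=\bigl\langle Uf,\,U\bigl(\overline{J(v)}\,K^{H_1}_{\psi(v)}\bigr)\bigr\rangle_{H_2}
=\bigl\langle g,\,\overline{J(v)}\,U K^{H_1}_{\psi(v)}\bigr\rangle_{H_2}.
\]
Since this holds for every $g\in H_2$, uniqueness of the reproducing kernel forces $K^{H_2}_v=\overline{J(v)}\,U K^{H_1}_{\psi(v)}$. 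Evaluating at $u\in Y$ and unfolding the definition of $U$ gives $K^{H_2}_v(u)=\overline{J(v)}\,J(u)\,K^{H_1}_{\psi(v)}(\psi(u))$, and swapping the names of the variables delivers the stated formula.

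There is no genuine obstacle here; the only subtlety worth flagging is the placement of the complex conjugate, which depends on the convention that makes the inner product linear in the first slot and conjugate-linear in the second (the convention fixed earlier in the paper). If one used the opposite convention, the factor would migrate from $\overline{J(v)}$ to $J(v)$; but under the convention in force, the computation above lands directly on the asserted identity.
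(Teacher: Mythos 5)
Your proof is correct. Note that the paper itself does not prove Proposition~\ref{prop:pushforward_RK}; it only cites \cite{LealMaximenkoRamos2021} and \cite[Sections~5.6 and 5.7]{PaulsenRaghupathi2016}, so there is no in-paper argument to compare against. Your computation is the standard one: transport the reproducing property through the surjective isometry (which preserves inner products by polarization), obtain $K^{H_2}_v=\overline{J(v)}\,U K^{H_1}_{\psi(v)}$ by uniqueness of the representing vector, and unfold $U$. Your first paragraph on bounded point evaluations is strictly redundant, since exhibiting the kernel in the second step already establishes that $H_2$ is an RKHS, but it does no harm. The remark on the conjugate placement is consistent with the paper's convention $\langle z,w\rangle=\sum_r z_r\overline{w_r}$ (linear in the first slot) and with the reproducing property in the form $f(x)=\langle f,K^{H_1}_x\rangle$.
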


\begin{prop}
\label{prop:Fm_al_kernel}
$\cF_{\al,m}$ is an RKHS with the kernel 
\begin{equation}
\label{eq:K_Fm_al_kernel}
K_z^{\cF_{\al,m}}(w)
=\enumber^{\al \langle w,z \rangle}
L_{m-1}^{(n)}(\al|w-z|^2).
\end{equation}
The norm of this kernel is
\begin{equation}
\label{eq:K_Fm_al_norm}
\|K_z^{\cF_{\al,m}}\|
_{\cF_{\al,m}}
=\sqrt{d_{n,m}}\,
\enumber^{\frac{\al}{2}|z|^2}.
\end{equation}
\end{prop}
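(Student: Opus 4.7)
The plan is to combine three facts already available in the excerpt: the explicit kernel \eqref{eq:K_Fm} of Youssfi for $\cF_{1,m}$, the isometric isomorphism $U_{\cF_{1,m}}^{\cF_{\al,m}}$ from Proposition~\ref{prop:Fm_rightarrow_Fmalpha}, and the pushforward rule for reproducing kernels in Proposition~\ref{prop:pushforward_RK}. Since $U_{\cF_{1,m}}^{\cF_{\al,m}}$ is an isometric isomorphism between Hilbert spaces of functions and $\cF_{1,m}$ is known to be an RKHS, $\cF_{\al,m}$ will automatically be an RKHS; the only task is to identify its kernel and compute its norm.

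First I would apply Proposition~\ref{prop:pushforward_RK} with $X=Y=\bC^n$, $\psi(z)=\sqrt{\al}\,z$, and $J\equiv 1$, so that $U=U_{\cF_{1,m}}^{\cF_{\al,m}}$ fits the hypotheses. The proposition then gives
\[
K_z^{\cF_{\al,m}}(w)
= K_{\sqrt{\al}z}^{\cF_{1,m}}(\sqrt{\al}w)
= \enumber^{\langle\sqrt{\al}w,\sqrt{\al}z\rangle}
L_{m-1}^{(n)}(|\sqrt{\al}w-\sqrt{\al}z|^2)
= \enumber^{\al\langle w,z\rangle}
L_{m-1}^{(n)}(\al|w-z|^2),
\]
which is exactly \eqref{eq:K_Fm_al_kernel}.

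For the norm, I would use the standard identity $\|K_z\|^2=K_z(z)$ in any RKHS. Here this yields
\[
\|K_z^{\cF_{\al,m}}\|^2_{\cF_{\al,m}}
= K_z^{\cF_{\al,m}}(z)
= \enumber^{\al|z|^2} L_{m-1}^{(n)}(0).
\]
The remaining ingredient is the well-known value $L_{m-1}^{(n)}(0)=\binom{n+m-1}{m-1}=d_{n,m}$ recorded in \eqref{eq:J_cardinality}, after which \eqref{eq:K_Fm_al_norm} follows by taking a square root. No step is genuinely delicate here; the only minor care is to note that Proposition~\ref{prop:pushforward_RK} indeed applies because $U_{\cF_{1,m}}^{\cF_{\al,m}}$ was just shown in Proposition~\ref{prop:Fm_rightarrow_Fmalpha} to be a well-defined isometric isomorphism, so $\cF_{\al,m}$ inherits the RKHS structure together with the explicit formula above.
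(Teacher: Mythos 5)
Your proposal is correct and follows exactly the same route as the paper: it applies Proposition~\ref{prop:pushforward_RK} with $J\equiv 1$ and $\psi(z)=\sqrt{\al}\,z$ to push Youssfi's kernel \eqref{eq:K_Fm} forward through the isometry of Proposition~\ref{prop:Fm_rightarrow_Fmalpha}, then evaluates on the diagonal and uses $L_{m-1}^{(n)}(0)=\binom{n+m-1}{n}=d_{n,m}$ for the norm. No gaps.
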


\begin{proof}
Apply Propositions~\ref{prop:Fm_rightarrow_Fmalpha} and \ref{prop:pushforward_RK}
with $J(z)=1$ and $\psi(z)=\sqrt{\al}z$:
\[K_u^{\cF_{\al,m}}(v)
=\overline{J(u)}J(v)K_{\psi(u)}^{\cF_{1,m}}(\psi(v))
=\enumber^{\al\langle v,u \rangle}
L_{m-1}^{(n)}(|\sqrt{\al}v-\sqrt{\al}u|^2)
=
\enumber^{\al\langle v,u \rangle}
L_{m-1}^{(n)}(\al|v-u|^2).
\]
To obtain~\eqref{eq:K_Fm_al_norm}
we evaluate the kernel on the diagonal
and recall that $d_{n,m}$ is defined by~\eqref{eq:J_cardinality}:
\[
\|K_z^{\cF_{\al,m}}\|
_{\cF_{\al,m}}
=\sqrt{K_z^{\cF_{\al,m}}(z)}
=\sqrt{L_{m-1}^{(n)}(0)
\enumber^{\al|z|^2}}
=\sqrt{\binom{n+m-1}{n}\,
\enumber^{\al|z|^2}}
=\sqrt{d_{n,m}}\,
\enumber^{\frac{\al}{2}|z|^2}.
\qedhere
\]
\end{proof}

Using the decomposition of generalized Laguerre polynomials given in Proposition~\ref{prop:Laguerre_decomposition}
we can decompose $K_z^{\cF_{\al,m}}(w)$ into a sum of products of $n$ factors where the $j$th factor involves only $z_j$ and $w_j$.

Recall that the Laguerre function $\ell_m$ is defined by
\begin{equation}
\label{eq:Laguerre_function_def}
\ell_m(t)
\eqdef \enumber^{-\frac{1}{2}t} L_m(t).
\end{equation}

\begin{cor}\label{cor:K_Fm_decomp}
For every $x,y,u,v$ in $\bR^n$,
\begin{equation}
\label{eq:K_Fm_via_sum_of_products_of_Laguerre_polynomials}
K_z^{\cF_{\al,m}}(w)
=\sum_{k\in J_{n,m}}\;
\prod_{r=1}^n \enumber^{\al w_r\conj{z_r}}
L_{k_r}(\al|w_r-z_r|^2).
\end{equation}
Equivalently,
\begin{equation}
\label{eq:K_Fm_via_sum_of_products_of_Laguerre_functions}
K_z^{\cF_{\al,m}}(w)=\sum_{k\in J_{n,m}}\;
\prod_{r=1}^n\enumber^{\frac{\alpha}{2}(|w_r|^2+|z_r|^2)+\imunit\alpha\Im(w_r\overline{z_r})}\ell_{k_r}(\alpha|w_r-z_r|^2).
\end{equation}
\end{cor}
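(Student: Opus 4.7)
The plan is to start from the explicit formula for the reproducing kernel in Proposition~\ref{prop:Fm_al_kernel}, namely
\[
K_z^{\cF_{\al,m}}(w)=\enumber^{\al\langle w,z\rangle}L_{m-1}^{(n)}(\al|w-z|^2),
\]
and then apply Proposition~\ref{prop:Laguerre_decomposition} to the Laguerre factor. Since $\langle w,z\rangle=\sum_{r=1}^n w_r\overline{z_r}$ splits the first exponential into the product $\prod_{r=1}^n \enumber^{\al w_r\overline{z_r}}$, and since $|w-z|^2=\sum_{r=1}^n|w_r-z_r|^2$ writes the Laguerre argument as a sum of $n$ scalars, we can apply Proposition~\ref{prop:Laguerre_decomposition} with $p=m-1$ and $t_r=\al|w_r-z_r|^2$ to obtain
\[
L_{m-1}^{(n)}(\al|w-z|^2)=\sum_{k\in J_{n,m}}\prod_{r=1}^n L_{k_r}(\al|w_r-z_r|^2).
\]
Multiplying by the product of exponentials yields formula~\eqref{eq:K_Fm_via_sum_of_products_of_Laguerre_polynomials}.

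For the second formula~\eqref{eq:K_Fm_via_sum_of_products_of_Laguerre_functions}, I would rewrite each factor $\enumber^{\al w_r\overline{z_r}}$ using the identity
\[
\Re(w_r\overline{z_r})=\tfrac{1}{2}\bigl(|w_r|^2+|z_r|^2-|w_r-z_r|^2\bigr),
\]
so that
\[
\enumber^{\al w_r\overline{z_r}}
=\enumber^{\frac{\al}{2}(|w_r|^2+|z_r|^2)+\imunit\al\Im(w_r\overline{z_r})}\,\enumber^{-\frac{\al}{2}|w_r-z_r|^2}.
\]
Absorbing the factor $\enumber^{-\frac{\al}{2}|w_r-z_r|^2}$ into $L_{k_r}(\al|w_r-z_r|^2)$ produces $\ell_{k_r}(\al|w_r-z_r|^2)$ by the definition~\eqref{eq:Laguerre_function_def}, which yields the second expression.

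There is no real obstacle here, since Proposition~\ref{prop:Laguerre_decomposition} already contains the only nontrivial piece. The remaining steps are just bookkeeping: splitting the Hermitian inner product and the squared norm coordinate-wise, and rearranging the single-variable factors using the elementary identity relating $\Re(w_r\overline{z_r})$ to $|w_r-z_r|^2$. The only care needed is to match the decomposition index set $J_{n,m}$ (which is what appears in Proposition~\ref{prop:Laguerre_decomposition} when $p=m-1$) with the one in the statement, and to correctly identify that $L_{k_r}$ without superscript means $L_{k_r}^{(0)}$.
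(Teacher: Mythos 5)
Your proof is correct and follows exactly the paper's route: the paper's own proof of this corollary is just the one-line remark that it follows from Propositions~\ref{prop:Fm_al_kernel} and~\ref{prop:Laguerre_decomposition}, and your write-up supplies precisely the bookkeeping (splitting $\langle w,z\rangle$ and $|w-z|^2$ coordinate-wise, taking $p=m-1$ so the index set is $J_{n,m}$, and using $\Re(w_r\overline{z_r})=\tfrac12(|w_r|^2+|z_r|^2-|w_r-z_r|^2)$ to pass from $L_{k_r}$ to $\ell_{k_r}$) that the paper leaves implicit.
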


\begin{proof}
Follows from Propositions~\ref{prop:Fm_al_kernel} and~\ref{prop:Laguerre_decomposition}.
\end{proof}

\section{Decomposition into ``true'' poly-Fock spaces}
\label{sec:true_poly_Fock}

In this section,
we explain connections of $\cF_{\al,m}(\bC^n)$ with spaces studied in~\cite{ArroyoSanchezHernandezLopez2021,AskourIntissarMouayn1997,Vasilevski2000polyFock}.

$\bigl(\cF_{\al,m}(\bC)\bigr)_{m\in\bN}$ is a strictly increasing sequence of closed subespaces of $L^2\left(\bC,\frac{\al}{\pi}\enumber^{-\al|z|^2}\mu_{2}\right)$.
For every $m$ in $\bN$, Vasilevski~\cite{Vasilevski2000polyFock} defined the true-$m$-Fock space
as the orthogonal complement of $\cF_{\al,m-1}(\bC)$ inside $\cF_{\al,m}(\bC)$, i.e.,
\[
\cF_{\al,(m)}(\bC) \eqdef \cF_{\al,m}(\bC) \ominus \cF_{\al,m-1}(\bC).
\]
We set $\cF_{\al,(0)}(\bC) \eqdef \cF_{\al,0}(\bC)\eqdef\{0\}$.
Askour, Intissar, and Mouayn~\cite{AskourIntissarMouayn1997} proved that $\cF_{\al,(m)}(\bC)$ is an RKHS whose reproducing kernel is
\[
K^{\cF_{\al,(m)}(\bC)}_z(w) = \enumber^{\al w\conj{z}}
L_{m-1}(\al|w-z|^2).
\]
For each multiindex $\be$ in $\bN^n$,
Arroyo Neri, S\'{a}nchez-Nungaray, Hern\'{a}ndez Marroquin, and L\'{o}pez-Mart\'{i}nez \cite{ArroyoSanchezHernandezLopez2021} introduced the true poly-Fock space over $\bC^n$ as
\begin{equation}\label{def:true_polyFock_multiindex}
\cF_{\al,(\beta)}(\bC^n)\eqdef\cF_{\al,(\beta_1)}(\bC)
\otimes \cdots \otimes \cF_{\al,(\beta_n)}(\bC).
\end{equation}
See also~\cite[Section 4]{Vasilevski2023} for a generalization with a vectorial parameter.
According to the general theory of RKHS~\cite[Theorem 5.11]{PaulsenRaghupathi2016}, this tensor product is also a RKHS, and its reproducing kernel is
\begin{equation}\label{eq:K_true_polyFock_multiindex}
K^{\cF_{\al,(\be)}(\bC^n)}_z(w) 
= \prod_{r=1}^n \enumber^{\al \langle w,z\rangle}
L_{\be_{r-1}}(\al|w_r-z_r|^2).
\end{equation}
Corollary~\ref{cor:K_Fm_decomp} implies that for every $m$ in $\bN$,
\begin{equation}\label{eq:true_polyFock_connection}
\cF_{\al,m}(\bC^n) = \bigoplus_{\substack{\beta\in\bN^n\\ n\le|\be|\le m+n-1}} \cF_{\al,(\be)}(\bC^n)
= \bigoplus_{k\in J_{n,m}} \cF_{\al,(k+\bone_n)}(\bC^n),
\end{equation}
where $\bone_n=(1, \ldots, 1)\in\bNz^n$.
This decomposition can also be proved by using unitary operators from~\cite{Vasilevski2000polyFock,ArroyoSanchezHernandezLopez2021}, which are similar to the operator $R_{\cF_{\al,m}}$ constructed in Section~\ref{sec:decomposition_of_spaces}.

\section{Flattened polyanalytic Bargmann--Segal--Fock space}
\label{sec:flattened_poly_Fock}

We denote by $\tmu_n$ the Lebesgue measure $\mu_n$ multiplied by $(2\pi)^{-n/2}$.
For each $f$ in $\cF_{\al,m}(\bC^n)$,
we denote by $U_{\cF_{\al,m}}^{\cH_m}f$ the function defined on $\bR^{2n}$ by
\begin{equation}
\label{def:U_Fm_Hm}
(U_{\cF_{\al,m}}^{\cH_m}f)(x,y)
\eqdef 
2^{\frac{n}{2}}
\enumber^{-\frac{1}{2}|x|^2
-\frac{1}{2}|y|^2
-\imunit\,\langle x,y\rangle}
f\left(\frac{x+\imunit y}{\sqrt{\al}}\right)
\qquad(x,y\in\bR^n).
\end{equation}
Let $\cH_m$ be the space 
$\{U_{\cF_{\al,m}}^{\cH_m}f\colon\ f\in\cF_{\al,m}\}$, provided with the inner product and norm from
$L^2(\bR^{2n},\tmu_{2n})$.
After proving formula~\eqref{eq:K_Hm} below we will see that $\cH_m$ does not depend on the parameter $\al$.

\begin{prop}
$\cH_m$ is a Hilbert space,
and $U_{\cF_{\al,m}}^{\cH_m}\colon\cF_{\al,m}\to\cH_m$ is an isometric isomorphism.
The inverse isometric isomorphism
$U_{\cH_m}^{\cF_{\al,m}}$ acts by
\begin{equation}
\label{eq:U_Hm_to_Fm}
(U_{\cH_m}^{\cF_{\al,m}} g)(u+\imunit v)
=2^{-\frac{n}{2}}\enumber^{\frac{\al}{2}|u|^2+\frac{\al}{2}|v|^2+\imunit\al\langle u,v\rangle} g\left(\sqrt{\alpha}u,\sqrt{\al}v\right).
\end{equation}
\end{prop}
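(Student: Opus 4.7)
The plan is to verify three things in order: the map $U \eqdef U_{\cF_{\al,m}}^{\cH_m}$ is isometric on $\cF_{\al,m}$; this forces $\cH_m$ to be a Hilbert space; and the explicit inversion formula can be read off by algebraic manipulation of~\eqref{def:U_Fm_Hm}. The bookkeeping of constants ($2^{n/2}$, $(2\pi)^{-n/2}$, $(\al/\pi)^{n/2}$, and the Jacobian $\al^n$) is the only delicate point, but it is routine.

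First, compute $\|Uf\|_{L^2(\bR^{2n},\tmu_{2n})}^2$ directly from~\eqref{def:U_Fm_Hm}. Since $|\enumber^{-\imunit\langle x,y\rangle}|=1$, the integrand is
\[
|(Uf)(x,y)|^2
= 2^n\,\enumber^{-|x|^2-|y|^2}\,
\left|f\!\left(\tfrac{x+\imunit y}{\sqrt{\al}}\right)\right|^2.
\]
Perform the change of variable $z=u+\imunit v$ with $x=\sqrt{\al}\,u$, $y=\sqrt{\al}\,v$, whose Jacobian gives $\dif\mu_{2n}(x,y)=\al^n\,\dif\mu_{2n}(u,v)$, and use $|x|^2+|y|^2=\al|z|^2$. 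Recalling $\tmu_{2n}=(2\pi)^{-n}\mu_{2n}$ and $2^n(2\pi)^{-n}=\pi^{-n}$,
\[
\|Uf\|^2
= \frac{2^n\al^n}{(2\pi)^n}\int_{\bC^n}|f(z)|^2\enumber^{-\al|z|^2}\,\dif\mu_{2n}(z)
= \frac{\al^n}{\pi^n}\int_{\bC^n}|f(z)|^2\enumber^{-\al|z|^2}\,\dif\mu_{2n}(z)
= \|f\|_{\cF_{\al,m}}^2.
\]
Hence $U$ is a linear isometry from $\cF_{\al,m}$ into $L^2(\bR^{2n},\tmu_{2n})$.

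By definition, $\cH_m$ is the range of $U$, and $U$ is bijective onto it. Since $\cF_{\al,m}$ is complete and $U$ is an isometry, $\cH_m$ inherits completeness; in particular $\cH_m$ is a closed subspace of $L^2(\bR^{2n},\tmu_{2n})$, so it is a Hilbert space with the induced inner product, and $U$ is an isometric isomorphism onto it.

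Finally, the inversion formula is obtained by solving~\eqref{def:U_Fm_Hm} for $f$: given $g=Uf$,
\[
f\!\left(\tfrac{x+\imunit y}{\sqrt{\al}}\right)
= 2^{-n/2}\,\enumber^{\frac{1}{2}|x|^2+\frac{1}{2}|y|^2+\imunit\langle x,y\rangle}\,g(x,y).
\]
Substituting $x=\sqrt{\al}\,u$, $y=\sqrt{\al}\,v$ so that the argument of $f$ becomes $u+\imunit v$ and $|x|^2+|y|^2=\al(|u|^2+|v|^2)$, $\langle x,y\rangle=\al\langle u,v\rangle$, yields exactly~\eqref{eq:U_Hm_to_Fm}. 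No deep obstacle arises; the only place to be careful is tracking the $2^{n/2}$ factor together with the $(2\pi)^{-n/2}$ in $\tmu_{2n}$ so that the weight ratio collapses to $(\al/\pi)^n$.
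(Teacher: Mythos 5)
Your proof is correct and follows essentially the same route as the paper: the same change of variables $x=\sqrt{\al}\,u$, $y=\sqrt{\al}\,v$ for the isometry, the same completeness argument for $\cH_m$, with all constants tracked correctly. The only (welcome) addition is that you explicitly derive the inversion formula~\eqref{eq:U_Hm_to_Fm} by solving~\eqref{def:U_Fm_Hm} for $f$, a step the paper states without computation.
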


\begin{proof}
Let us verify the isometric property.
We apply the changes of variables $x=\sqrt{\al}u$, $y=\sqrt{\al}v$,
and then $z=u+\imunit v$:
\begin{align*}
\|U_{\cF_{\al,m}}^{\cH_m}f\|_{\cH_m}^2
&=
\frac{1}{(2\pi)^{n}}
\int_{\bR^{2n}}2^n
\enumber^{-(|x|^2+|y|^2)}
\left|
f\left(\frac{x+\imunit y}{\sqrt{\al}}\right)
\right|^2\,
\dif\mu_n(x)\,\dif\mu_n(y)
\\
&=
\frac{\al^n}{\pi^n}
\int_{\bR^{2n}}
\enumber^{-\al(|u|^2+|v|^2)}
\left|
f(u+\imunit v)
\right|^2\,
\dif\mu_n(u)\,\dif\mu_n(v)
\\
&=
\frac{\al^n}{\pi^n}
\int_{\bC^n} \enumber^{-\al|z|^2}\,|f(z)|^2\,\dif\mu_{2n}(z)
=
\|f\|_{\cF_{\al,m}}^2.
\end{align*}
By the definition of $\cH_m$,
$U_{\cF_{\al,m}}^{\cH_m}$ is surjective.
Finally, $\cH_m$ is complete because $\cF_{\al,m}$ is complete.
\end{proof}

\begin{prop}
\label{prop:K_Hm}
$\cH_m$ is a RKHS over $\bR^{2n}$.
Its reproducing kernel is
\begin{equation}
\label{eq:K_Hm}
K^{\cH_m}_{x,y}(u,v)
=
2^n\,
\enumber^{-\frac{1}{2}(|u-x|^2+|v-y|^2)
-\imunit\,\langle u-x,v+y\rangle}
L_{m-1}^{(n)}(|u-x|^2+|v-y|^2).
\end{equation}
The norm of this reproducing kernel is
\begin{equation}
\label{eq:K_Hm_norm}
\|K^{\cH_m}_{x,y}\|_{\cH_m}
=\sqrt{2^n d_{n,m}}.
\end{equation}
\end{prop}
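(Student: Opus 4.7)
The plan is to apply the pushforward principle for reproducing kernels (Proposition~\ref{prop:pushforward_RK}) to the isometric isomorphism $U_{\cF_{\al,m}}^{\cH_m}\colon\cF_{\al,m}\to\cH_m$ described by~\eqref{def:U_Fm_Hm}. Writing that formula as $(U_{\cF_{\al,m}}^{\cH_m}f)(x,y)=J(x,y)\,f(\psi(x,y))$, I would set
\[
\psi(x,y)=\frac{x+\imunit y}{\sqrt{\al}},\qquad
J(x,y)=2^{n/2}\enumber^{-\frac12|x|^2-\frac12|y|^2-\imunit\langle x,y\rangle}.
\]
Then Proposition~\ref{prop:pushforward_RK} gives
\[
K^{\cH_m}_{x,y}(u,v)
=\overline{J(x,y)}\,J(u,v)\,K^{\cF_{\al,m}}_{\psi(x,y)}(\psi(u,v)),
\]
and I would substitute the explicit formula~\eqref{eq:K_Fm_al_kernel} for the Fock kernel on the right.

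The Laguerre factor is the easy part: since $\psi(u,v)-\psi(x,y)=\bigl((u-x)+\imunit(v-y)\bigr)/\sqrt{\al}$, one has $\al|\psi(u,v)-\psi(x,y)|^2=|u-x|^2+|v-y|^2$, so the argument of $L^{(n)}_{m-1}$ matches~\eqref{eq:K_Hm} immediately. The main obstacle is the algebraic verification that the three exponential factors combine to give exactly
\[
2^n\exp\!\Bigl(-\tfrac12(|u-x|^2+|v-y|^2)-\imunit\langle u-x,v+y\rangle\Bigr).
\]
For this I would compute $\al\langle\psi(u,v),\psi(x,y)\rangle=\langle u+\imunit v,\,x+\imunit y\rangle_{\bC^n}$ and expand it into real and imaginary parts using the real inner products in $\bR^n$. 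Adding this to the exponents coming from $J(u,v)$ and $\overline{J(x,y)}$, the real part collapses to $-\tfrac12(|u-x|^2+|v-y|^2)$ via the identity $|a|^2+|b|^2-2\langle a,b\rangle=|a-b|^2$, while the imaginary part simplifies to $-\langle u,v\rangle+\langle x,y\rangle+\langle v,x\rangle-\langle u,y\rangle$, which (after grouping) equals $-\langle u-x,v+y\rangle$. This is a short but slightly delicate bookkeeping step.

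Having established~\eqref{eq:K_Hm}, the norm formula~\eqref{eq:K_Hm_norm} follows from evaluating the kernel on the diagonal: setting $(u,v)=(x,y)$ kills both the Gaussian and the oscillatory exponent, leaving $K^{\cH_m}_{x,y}(x,y)=2^n L^{(n)}_{m-1}(0)=2^n\binom{n+m-1}{n}=2^n d_{n,m}$, so $\|K^{\cH_m}_{x,y}\|_{\cH_m}=\sqrt{K^{\cH_m}_{x,y}(x,y)}=\sqrt{2^n d_{n,m}}$, in agreement with~\eqref{eq:J_cardinality}. As a sanity check, the resulting kernel depends only on $(u-x,v-y)$ up to the phase factor $\enumber^{-\imunit\langle u-x,v+y\rangle}$, which is consistent with the announced horizontal translation invariance exploited in Section~\ref{sec:Weyl}, and it is manifestly independent of $\al$, as claimed right after the definition of $\cH_m$.
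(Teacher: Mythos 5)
Your proposal is correct and follows essentially the same route as the paper: apply the pushforward principle (Proposition~\ref{prop:pushforward_RK}) to $U_{\cF_{\al,m}}^{\cH_m}$ with the same $J$ and $\psi$, verify that the three exponential factors combine into $-\tfrac12(|u-x|^2+|v-y|^2)-\imunit\langle u-x,v+y\rangle$, and obtain the norm by evaluating on the diagonal using $L_{m-1}^{(n)}(0)=d_{n,m}$. The exponent bookkeeping you outline matches the paper's computation of $E_1(x,y,u,v)$ exactly.
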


\begin{proof}
By Proposition~\ref{prop:pushforward_RK}
and formula~\eqref{eq:K_Fm},
\begin{align*}
K^{\cH_m}_{x,y}(u,v)
&=
2^n
\enumber^{-\frac{1}{2}
\left(|u|^2+|v|^2
+2\imunit\,\langle u,v\rangle\right)}
\enumber^{-\frac{1}{2}
\left(|x|^2+|y|^2
-2\imunit\,\langle x,y\rangle\right)}
K^{\cF_{\al,m}}_{\frac{1}{\sqrt{\al}}\bigl(x+\imunit y\bigr)}
\left(\frac{u+\imunit v}{\sqrt{\al}}\right)
\\
&=
2^n
\enumber^{E_1(x,y,u,v)}
L_{m-1}^{(n)}(|u-x|^2+|v-y|^2),
\end{align*}
where $E_1(x,y,u,v)$ is the following expression:
\begin{align*}
E_1(x,y,u,v)
&=
-\frac{|u|^2+|v|^2}{2}
-\imunit\,\langle u,v\rangle
-\frac{|x|^2+|y|^2}{2}
+\imunit\,\langle x,y\rangle
+\langle u+\imunit v,x+\imunit y\rangle
\\
&=
-\frac{|u|^2-2\langle u,x\rangle+|x|^2}{2}
-\frac{|v|^2-2\langle
v,y\rangle+|y|^2}{2}
\\
&\qquad\qquad
-\imunit\,
\bigl(
\langle u,v\rangle
+\langle u,y\rangle
-\langle x,v\rangle
-\langle x,y\rangle
\bigr)
\\
&=
-\frac{|u-x|^2+|v-y|^2}{2}
-\imunit\,\langle u-x,v+y\rangle.
\end{align*}
Thereby we obtain~\eqref{eq:K_Hm}.
Finally, to compute the norm of the kernel, we evaluate it on the diagonal ($u=x$, $v=y$).
\end{proof}

\begin{cor}
\label{cor:UFH_K}
Let $x,y\in\bR^n$ and $z=x+\imunit y$.
Then
\begin{equation}
\label{eq:UFH_K}
U_{\cF_{\al,m}}^{\cH_m}
K_z^{\cF_{\al,m}}
=2^{-\frac{n}{2}}
\enumber^{\frac{\al}{2}|x|^2+\frac{\al}{2}|y|^2
-\imunit\al\langle x,y\rangle}
K_{\sqrt{\al}x,\sqrt{\al}y}^{\cH_m}.
\end{equation}
\end{cor}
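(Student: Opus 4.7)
The plan is to avoid substituting the explicit formulas for the two kernels into \eqref{def:U_Fm_Hm} and grinding through exponentials by, instead, exploiting a general identity relating an isometric pushforward of RKHS to its action on reproducing kernels. The operator $U_{\cF_{\al,m}}^{\cH_m}$ defined in \eqref{def:U_Fm_Hm} is exactly a pushforward of the form considered in Proposition~\ref{prop:pushforward_RK}, with
\[
\psi(u,v) = \frac{u+\imunit v}{\sqrt{\al}},\qquad
J(u,v) = 2^{n/2}\,\enumber^{-\frac{1}{2}|u|^2-\frac{1}{2}|v|^2-\imunit\langle u,v\rangle},
\]
so Proposition~\ref{prop:pushforward_RK} gives $K^{\cH_m}_{a,b}(u,v) = \overline{J(a,b)}\,J(u,v)\,K^{\cF_{\al,m}}_{\psi(a,b)}(\psi(u,v))$.

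First I would extract a general identity for $U_{\cF_{\al,m}}^{\cH_m}$ applied to a reproducing kernel of $\cF_{\al,m}$. By the definition of the pushforward, $(U_{\cF_{\al,m}}^{\cH_m} K^{\cF_{\al,m}}_{\psi(a,b)})(u,v) = J(u,v)\,K^{\cF_{\al,m}}_{\psi(a,b)}(\psi(u,v))$, and comparing with the kernel formula above yields
\[
U_{\cF_{\al,m}}^{\cH_m} K^{\cF_{\al,m}}_{\psi(a,b)} = \frac{1}{\overline{J(a,b)}}\,K^{\cH_m}_{a,b}.
\]

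Second, I would specialize to the target point. For $z = x+\imunit y$ with $x,y\in\bR^n$, one has $z = \psi(\sqrt{\al}x,\sqrt{\al}y)$, so choosing $(a,b) = (\sqrt{\al}x,\sqrt{\al}y)$ in the previous display gives
\[
U_{\cF_{\al,m}}^{\cH_m} K^{\cF_{\al,m}}_z = \frac{1}{\overline{J(\sqrt{\al}x,\sqrt{\al}y)}}\,K^{\cH_m}_{\sqrt{\al}x,\sqrt{\al}y}.
\]
A one-line conjugation gives $\overline{J(\sqrt{\al}x,\sqrt{\al}y)} = 2^{n/2}\,\enumber^{-\frac{\al}{2}|x|^2 - \frac{\al}{2}|y|^2 + \imunit\al\langle x,y\rangle}$, whose reciprocal is precisely the scalar $2^{-n/2}\,\enumber^{\frac{\al}{2}|x|^2 + \frac{\al}{2}|y|^2 - \imunit\al\langle x,y\rangle}$ appearing in \eqref{eq:UFH_K}.

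No step poses a genuine obstacle on this route. A direct pointwise verification would instead require substituting \eqref{eq:K_Fm_al_kernel} into \eqref{def:U_Fm_Hm}, reducing $\al\,|(u+\imunit v)/\sqrt{\al} - z|^2$ to $|u-\sqrt{\al}x|^2+|v-\sqrt{\al}y|^2$, expanding $\al\,\langle(u+\imunit v)/\sqrt{\al},\,x+\imunit y\rangle$ into real and imaginary parts, and matching exponents with \eqref{eq:K_Hm} while completing squares in $u,v$ and using the symmetry $\langle x,v\rangle = \langle v,x\rangle$ of the real inner product; this is routine but sufficiently error-prone with complex-linear pairings that the abstract route above is clearly preferable.
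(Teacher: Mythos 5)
Your argument is correct and is essentially the paper's own: the published proof just cites the definition of $U_{\cF_{\al,m}}^{\cH_m}$ together with formula \eqref{eq:K_Hm}, which was itself derived from Proposition~\ref{prop:pushforward_RK} with exactly your $J$ and $\psi$, so your identity $U_{\cF_{\al,m}}^{\cH_m}K^{\cF_{\al,m}}_{\psi(a,b)}=\overline{J(a,b)}^{\,-1}K^{\cH_m}_{a,b}$ is a clean restatement of that step. Your evaluation $\overline{J(\sqrt{\al}x,\sqrt{\al}y)}^{\,-1}=2^{-n/2}\enumber^{\frac{\al}{2}|x|^2+\frac{\al}{2}|y|^2-\imunit\al\langle x,y\rangle}$ and the observation $\psi(\sqrt{\al}x,\sqrt{\al}y)=z$ are both correct, so the proof is complete.
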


\begin{proof}
It follows from~\eqref{def:U_Fm_Hm} and~\eqref{eq:K_Hm}.
Notice that~\eqref{eq:UFH_K},
jointly with~\eqref{eq:K_Fm_al_norm} and~\eqref{eq:K_Hm_norm},
can be used to test the isometric property of $U_{\cF_{\al,m}}^{\cH_m}$.
The right-hand side of
$U_{\cF_{\al,m}}^{\cH_m}$
has norm
$\sqrt{d_{n,m}}\enumber^{\frac{\al}{2}|z|^2}$,
which coincides with the norm of $K_z^{\cF_{\al,m}}$.
\end{proof}

\begin{cor}
For every $x,y,u,v$ in $\bR^n$,
\begin{equation}
\label{eq:K_Hm_via_sum_of_products_of_Laguerre_functions}
K_{x,y}^{\cH_m}(u,v)
= 2^n
\sum_{k\in J_{n,m}}\;
\prod_{r=1}^n 
\enumber^{-\imunit\,(u_r-x_r)(v_r+y_r)}\,
\ell_{k_r}((u_r-x_r)^2+(v_r-y_r)^2).
\end{equation}
\end{cor}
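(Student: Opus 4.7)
The plan is to combine the closed expression~\eqref{eq:K_Hm} with the Laguerre summation formula of Proposition~\ref{prop:Laguerre_decomposition}. The key observation is that everything appearing in~\eqref{eq:K_Hm} outside the Laguerre polynomial already factorizes coordinate-wise: one has
$|u-x|^2+|v-y|^2=\sum_{r=1}^n\bigl((u_r-x_r)^2+(v_r-y_r)^2\bigr)$
and
$\langle u-x,v+y\rangle=\sum_{r=1}^n (u_r-x_r)(v_r+y_r)$,
so the exponential prefactor in~\eqref{eq:K_Hm} rewrites as a product over $r$ of the single-variable factor $\enumber^{-\frac{1}{2}((u_r-x_r)^2+(v_r-y_r)^2)-\imunit(u_r-x_r)(v_r+y_r)}$.

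Next, I would apply Proposition~\ref{prop:Laguerre_decomposition} with $p\eqdef m-1$ and $t_r\eqdef (u_r-x_r)^2+(v_r-y_r)^2$ to expand $L_{m-1}^{(n)}(t_1+\cdots+t_n)$ as $\sum_{k\in J_{n,m}}\prod_{r=1}^n L_{k_r}(t_r)$. Substituting this sum into~\eqref{eq:K_Hm} and commuting the sum with the product of exponential factors, I obtain a sum over $k\in J_{n,m}$ of products over $r$ of
$\enumber^{-\frac{1}{2}t_r-\imunit(u_r-x_r)(v_r+y_r)}L_{k_r}(t_r)$. The last step is to absorb the real factor $\enumber^{-t_r/2}$ into $L_{k_r}(t_r)$ using the definition~\eqref{eq:Laguerre_function_def} of the Laguerre function, which produces $\ell_{k_r}(t_r)$ and leaves the pure phase $\enumber^{-\imunit(u_r-x_r)(v_r+y_r)}$ outside, matching~\eqref{eq:K_Hm_via_sum_of_products_of_Laguerre_functions}.

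I do not anticipate any genuine obstacle: all analytic content is contained in Propositions~\ref{prop:K_Hm} and~\ref{prop:Laguerre_decomposition}, and the present statement is obtained by a careful coordinate-wise regrouping of factors. The only point that requires mild attention is verifying that the complex phase $-\imunit\langle u-x,v+y\rangle$ really splits as a sum over $r$ in a way compatible with the product decomposition, which is immediate because the inner product is bilinear over $\bR^n$.
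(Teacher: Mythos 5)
Your proof is correct and follows exactly the route the paper takes: the paper's proof is the one-line remark that the corollary ``follows from Propositions~\ref{prop:K_Hm} and~\ref{prop:Laguerre_decomposition},'' and your write-up simply makes explicit the coordinate-wise factorization of the exponential prefactor, the application of the Laguerre decomposition with $p=m-1$ and $t_r=(u_r-x_r)^2+(v_r-y_r)^2$, and the absorption of $\enumber^{-t_r/2}$ into $L_{k_r}$ via~\eqref{eq:Laguerre_function_def}. No gaps.
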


\begin{proof}
Follows from Propositions~\ref{prop:K_Hm} and~\ref{prop:Laguerre_decomposition}.
\end{proof}

\begin{rem}
\label{rem:Vasilevski_flatten_Fock}
Vasilevski~\cite[Section~2]{Vasilevski2000polyFock} used a multiplication by $\enumber^{-\frac{1}{2}|z|^2}$
to obtain a ``flattened'' version of $\cF_{1,m}$.
Unfortunately, this natural idea is not compatible with our scheme.
Indeed, consider the isometric isomorphism
$U_{\cF_{\al,m}}^{\cG_m}f\colon
\cF_{\al,m}\to\cG_m\le L^2(\bR^{2n},\tmu_{2n})$,
\begin{equation}
\label{def:U_Fm_Gm}
(U_{\cF_{\al,m}}^{\cG_m}f)(x,y)
\eqdef 
2^{\frac{n}{2}}
\enumber^{-\frac{1}{2}|x|^2
-\frac{1}{2}|y|^2}
f\left(\frac{x+\imunit y}{\sqrt{\al}}\right)
\qquad(x,y\in\bR^n).
\end{equation}
Define $\cG_m$ as the image of this operator.
Then $\cG_m$ is an RKHS, and its reproducing kernel is
\begin{equation}
\label{eq:K_Gm}
K^{\cG_m}_{x,y}(u,v)
=
2^n\,
\enumber^{-\frac{1}{2}(|u-x|^2+|v-y|^2)
-\imunit\,(\langle u-x,y+v\rangle+\langle x,y\rangle-\langle v,u\rangle)}
L_{m-1}^{(n)}(|u-x|^2+|v-y|^2).
\end{equation}
It is easy to see that 
$K_{x,y}^{\cG_m}(u,v)$, in general, does not coincide with
$K_{0,y}^{\cG_m}(u-x,v)$.
Therefore, by~\cite[Proposition~6.1]{HerreraMaximenkoRamos2022},
$\cG_m$ is not invariant with respect to horizontal translations.
So, the factor
$\enumber^{-\imunit \langle x,y\rangle}$
in the definition of $U_{\cF_{\al,m}}^{\cH_m}$ cannot be dropped.
\end{rem}

\begin{rem}[``Polyanalytic Steinwart--Hush--Scovel space'']
In~\cite{SteinwartHushScovel2006}, Steinwart, Hush, and Scovel gave an explicit description of the RKHS associated to the Gaussian kernel which is widely used in machine learning.
We introduce the following $m$-analytic analog of the space studied by Steinwart, Hush, and Scovel.
Let $\sigma>0$.
Define $\cS_{\si,m}$ as the space of all $m$-analytic functions $\bC^n\to\bC$ such that the following norm is finite:
\begin{equation}
\label{eq:def_Sm_norm}
\|f\|_{\cS_{\si,m}}
\eqdef 
\left(\frac{2^n \sigma^{2n}}{\pi^n}
\int_{\bC^n} |f(z)|^2
\exp\left(-4\sigma^2
\sum_{k=1}^n \Im(z_k)^2\right)
\dif{}\mu_{2n}(z) \right)^{1/2}.
\end{equation}
Furthermore, for $\al=2\sigma^2$ we define $U_{\cF_{\al,m}}^{\cS_{\si,m}}\colon\cF_{\al,m}\to\cS_m^{\sigma}$
and
$U_{\cS_{\si,m}}^{\cF_{\al,m}}\colon\cF_{\al,m}\to\cS_{\si,m}$
by
\begin{equation}
\label{eq:U_Fm_to_Sm}
U_{\cF_{\al,m}}^{\cS_{\si,m}}f(z)
\eqdef
\exp\left(-\sigma^2 \sum_{k=1}^n z_k^2\right)
f(z),
\qquad
(U_{\cS_m^\sigma}^{\cF_{\al,m}}
g)(z)
\eqdef\exp\left(\sigma^2 \sum_{k=1}^n z_k^2\right)
g(z).
\end{equation}
It is easy to see that
$U_{\cF_{\al,m}}^{\cS_m^\sigma}$
and $U_{\cS_m^\sigma}^{\cF_{\al,m}}$
are well-defined isometric isomorphisms of Hilbert spaces,
and they are mutually inverses.
By Proposition~\ref{prop:pushforward_RK},
$\cS_m^{\sigma}$ is a RKHS,
with the kernel:
\begin{equation}
\label{eq:Sm_kernel}
K_z^{\cS_m^{\sigma}}(w)
=
\exp\left(-\sigma^2
\sum_{k=1}^n (w_k-\overline{z_k})^2\right)
L_{m-1}^{(n)}\bigl(2\sigma^2|w-z|^2\bigr).
\end{equation}
Obviously,
$K_{z+a}^{\cS_m^{\sigma}}(w+a)
=K_z^{\cS_m^{\sigma}}(w)$
for every $a$ in $\bR^n$.
It follows by~\cite[Proposition~6.1]{HerreraMaximenkoRamos2022}
that $\cS_m^\sigma$ is invariant under horizontal translations.
We could use this space instead of $\cH_m$.
\end{rem}

\section{Horizontal Weyl operators and horizontal translations}
\label{sec:Weyl}

Given $a$ in $\bC^n$, we denote by $W_{a}$ the \emph{Weyl shift operator} acting in $L^2\left(\bC^n,\frac{\al^n}{\pi^n}\enumber^{-\al|z|^2}\mu_{2n}\right)$ by the rule
\[
(W_{a} f)(z)
\eqdef f(z-a)\,
\enumber^{\al\langle z,a\rangle-\frac{\al}{2}|a|^2}.
\]
These operators or their compressions to the analytic Fock space are considered in Folland~\cite[Chap. 2]{Folland1989} and Zhu~\cite[Section 2.6]{ZhuFock}.
It is known and easy to verify that $W_a$ is a unitary operator.
Given $f$ in $L^2\left(\bC^n,\frac{\al^n}{\pi^n}\enumber^{-\al|z|^2}\mu_{2n}\right)$,
the function $a\mapsto W_af$ is continuous (see a similar fact in~\cite[Proposition~2.41]{Folland1995}).

Moreover, for a fixed $a$ in $\bC^n$,
the function
$z\mapsto \enumber^{\al\langle z,a\rangle
-\frac{\al}{2}|a|^2}$
is analytic.
Since the elements of $\cF_{\al,m}$ have decomposition~\eqref{eq:polyanalytic_functions_decomposition}, we conclude that $\cF_{\al,m}$ is an invariant subspace with respect to $W_a$.
We denote by $W_{\al,m,a}$ the compression of $W_a$ to the subspace $\cF_{\al,m}$.
For each $a$ in $\bC^n$,
$W_{\al,m,a}$ is a unitary operator in $\cF_{\al,m}$.

In this paper,
we restrict ourselves to the ``horizontal'' Weyl operators in $\cF_{\al,m}$,
associated to the real translations,
associated to $a$ in $\bR^n$,
and denote them by $\rho_{\cF_{\al,m}}(a)$.
Additionally, we consider horizontal translations in $\cH_m$.
Formally, for every $a$ in $\bR^n$, we define the following operators.
\begin{itemize}
\item $\rho_{\cF_{\al,m}}(a)\colon\cF_{\al,m}\to\cF_{\al,m}$,
$\rho_{\cF_{\al,m}}(a)\eqdef W_{\al,m,a}$, i.e.,
\begin{equation}
\label{eq:horizontal_Weyl}
(\rho_{\cF_{\al,m}}(a) f)(z)
\eqdef f(z-a)\,\enumber^{\al\langle z,a\rangle-\frac{\al}{2}|a|^2}.
\end{equation}
\item $\rho_{\cH_m}(a)\colon\cH_m\to\cH_m$,
\begin{equation}
\label{eq:translation_Hm}
(\rho_{\cH_m}(a) f)(x,y)
\eqdef f(x-a,y).
\end{equation}
\end{itemize}

\begin{prop}
$(\rho_{\cF_{\al,m}},\cF_{\al,m})$
and $(\rho_{\cH_m},\cH_m)$
are unitary representation of $\bR^n$.
\end{prop}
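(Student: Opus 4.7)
The plan is to verify, for each of the two candidates, the three defining properties of a unitary representation: (i) $\rho(a)$ is unitary for every $a\in\bR^n$, (ii) $\rho(a+b)=\rho(a)\rho(b)$ and $\rho(0)=I$, and (iii) $a\mapsto\rho(a)f$ is continuous for every fixed $f$. For $\rho_{\cH_m}$ there is the additional point that $\cH_m$ itself must be shown stable under horizontal translation.

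For $\rho_{\cF_{\al,m}}$ the preceding paragraphs already record the two key facts: the full Weyl operator $W_a$ is unitary on the ambient weighted $L^2$ space, and $\cF_{\al,m}$ is $W_a$-invariant because the multiplier $\enumber^{\al\langle z,a\rangle-\frac{\al}{2}|a|^2}$ is analytic in $z$ and so preserves the decomposition~\eqref{eq:polyanalytic_functions_decomposition}. Consequently $\rho_{\cF_{\al,m}}(a)$ is a compression of a unitary to an invariant subspace, hence unitary, and the cited continuity of $a\mapsto W_a f$ immediately descends to the subspace. For the group law I would compute $\rho_{\cF_{\al,m}}(a)\rho_{\cF_{\al,m}}(b)f$ directly from~\eqref{eq:horizontal_Weyl}: the argument of $f$ becomes $z-a-b$, and the product of the two exponential factors equals $\enumber^{\al\langle z-a,b\rangle+\al\langle z,a\rangle-\frac{\al}{2}(|a|^2+|b|^2)}$. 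Because $a,b\in\bR^n$, one has $\langle z-a,b\rangle=\langle z,b\rangle-\langle a,b\rangle$ and $|a+b|^2=|a|^2+2\langle a,b\rangle+|b|^2$, so the exponent collapses to $\al\langle z,a+b\rangle-\frac{\al}{2}|a+b|^2$ and the product is $\rho_{\cF_{\al,m}}(a+b)f$.

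For $\rho_{\cH_m}$ the isometric property on $L^2(\bR^{2n},\tmu_{2n})$ is immediate from translation invariance of Lebesgue measure, the group law is obvious from the definition, and strong continuity of real translations on $L^2$ is a classical fact. The only nontrivial point is invariance of the subspace $\cH_m$. The cleanest route is to compute $(\rho_{\cH_m}(a)\,U_{\cF_{\al,m}}^{\cH_m} f)(x,y)$ directly from~\eqref{def:U_Fm_Hm}: after factoring out $\enumber^{-\frac{1}{2}|x|^2-\frac{1}{2}|y|^2-\imunit\langle x,y\rangle}$, the remaining $a$-dependent factor becomes $\enumber^{\langle x+\imunit y,a\rangle-\frac{1}{2}|a|^2}$, which, at $z=(x+\imunit y)/\sqrt{\al}$ and with parameter $a/\sqrt{\al}$, is exactly the Weyl exponential appearing in~\eqref{eq:horizontal_Weyl}. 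This identifies $\rho_{\cH_m}(a)\,U_{\cF_{\al,m}}^{\cH_m} f$ with $U_{\cF_{\al,m}}^{\cH_m}\bigl(\rho_{\cF_{\al,m}}(a/\sqrt{\al})f\bigr)$, and since $U_{\cF_{\al,m}}^{\cH_m}$ is a surjective isometry onto $\cH_m$, this simultaneously proves that $\cH_m$ is $\rho_{\cH_m}(a)$-invariant and that $\rho_{\cH_m}(a)|_{\cH_m}$ is unitarily conjugate to $\rho_{\cF_{\al,m}}(a/\sqrt{\al})$, so unitary.

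The main obstacle is simply bookkeeping of complex exponents in this last calculation: one must carefully track real versus imaginary parts when passing from coordinates $(x,y)\in\bR^{2n}$ to $z=(x+\imunit y)/\sqrt{\al}\in\bC^n$, and recognize that the phase $\enumber^{-\imunit\langle x,y\rangle}$ built into the definition of $U_{\cF_{\al,m}}^{\cH_m}$ is precisely what converts the plain translation on $\cH_m$ into the analytic Weyl factor on $\cF_{\al,m}$. Everything else reduces to facts already cited in the preceding paragraphs or to one-line observations about translation invariance of Lebesgue measure.
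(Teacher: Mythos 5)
Your argument is correct, and for the $\cF_{\al,m}$ part it coincides with the paper's: unitarity comes from compressing the unitary Weyl operator $W_a$ to the invariant subspace, continuity descends from the ambient space, and the group law is a direct exponent computation. Where you genuinely diverge is in proving that $\cH_m$ is stable under $\rho_{\cH_m}(a)$. The paper reads off from~\eqref{eq:K_Hm} that $K^{\cH_m}_{x,y}(u,v)=K^{\cH_m}_{0,y}(u-x,v)$ and invokes the translation-invariance criterion for reproducing kernel Hilbert spaces from \cite[Proposition~6.1]{HerreraMaximenkoRamos2022}, which makes this proof essentially one line. You instead establish the intertwining identity $\rho_{\cH_m}(a)\,U_{\cF_{\al,m}}^{\cH_m}=U_{\cF_{\al,m}}^{\cH_m}\,\rho_{\cF_{\al,m}}(a/\sqrt{\al})$ by direct computation of the exponents, and deduce invariance and unitarity of $\rho_{\cH_m}(a)|_{\cH_m}$ from surjectivity of $U_{\cF_{\al,m}}^{\cH_m}$; your bookkeeping of the factor $\enumber^{\langle x+\imunit y,a\rangle-\frac{1}{2}|a|^2}$ checks out. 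The trade-off is that your route front-loads exactly the calculation the paper defers to Proposition~\ref{prop:U_rho_F_eq_rho_H_U} (proved immediately afterwards with the same expansion of $E_2(x,y)$), so you get the intertwining relation for free here at the cost of a longer proof; the paper's route keeps this proposition short by leaning on the RKHS machinery it has already set up. Both are complete and correct.
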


\begin{proof}
We have already explained that for every $a$ in $\bR^n$,
$\rho_{\cF_{\al,m}}(a)$ is a unitary operator in $\cF_{\al,m}$.
The strong continuity of $\rho_{\cF_{\al,m}}$
and $\rho_{\cH_m}$
can be proved similarly to \cite[Proposition~2.41]{Folland1995}.

Directly from~\eqref{eq:K_Hm},
we observe that
$K^{\cH_m}_{x,y}(u,v)=K^{\cH_m}_{0,y}(u-x,v)$. By~\cite[Proposition~6.1]{HerreraMaximenkoRamos2022},
this implies that $\cH_m$
is invariant under the action of $\rho_{\cH_m}$.
The algebraic properties
\[
\rho_{\cF_{\al,m}}(a+b)=\rho_{\cF_{\al,m}}(a)\rho_{\cF_{\al,m}}(b),\qquad
\rho_{\cH_m}(a+b)=\rho_{\cH_m}(a)\rho_{\cH_m}(b)
\]
are easy to verify directly.
\end{proof}

\begin{prop}
\label{prop:U_rho_F_eq_rho_H_U}
For every $a$ in $\bR^n$,
\begin{equation}
\label{eq:U_Fm_Hm_intertwines}
U_{\cF_{\al,m}}^{\cH_m} \rho_{\cF_{\al,m}}(a)
=
\rho_{\cH_m}(\sqrt{\al}\,a) U_{\cF_{\al,m}}^{\cH_m}.
\end{equation}
\end{prop}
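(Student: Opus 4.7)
The identity is an equality of bounded operators on $\cF_{\al,m}$, and since both sides are explicitly given by pointwise formulas, the plan is simply to apply both compositions to an arbitrary $f\in\cF_{\al,m}$, evaluate at $(x,y)\in\bR^{2n}$, and verify that the resulting expressions agree by manipulating the exponential factors. There is no deep content; the interest is in checking that the weight in the definition of $U_{\cF_{\al,m}}^{\cH_m}$ has been chosen so that the Weyl cocycle on the $\cF_{\al,m}$ side is absorbed into a pure translation on the $\cH_m$ side.

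First, starting from the left-hand side, I would apply definition~\eqref{eq:horizontal_Weyl} and then~\eqref{def:U_Fm_Hm}, obtaining
\[
(U_{\cF_{\al,m}}^{\cH_m}\rho_{\cF_{\al,m}}(a)f)(x,y)
= 2^{n/2}\enumber^{-\frac12|x|^2-\frac12|y|^2-\imunit\langle x,y\rangle}\enumber^{\al\langle z,a\rangle-\frac{\al}{2}|a|^2}
f\!\left(\tfrac{x+\imunit y}{\sqrt{\al}}-a\right),
\]
where $z=(x+\imunit y)/\sqrt{\al}$. Because $a\in\bR^n$ is real, $\al\langle z,a\rangle$ splits cleanly as $\langle x,\sqrt{\al}\,a\rangle+\imunit\langle y,\sqrt{\al}\,a\rangle$, and the argument of $f$ becomes $((x-\sqrt{\al}\,a)+\imunit y)/\sqrt{\al}$. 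Writing $b\eqdef\sqrt{\al}\,a$ this gives
\[
(U_{\cF_{\al,m}}^{\cH_m}\rho_{\cF_{\al,m}}(a)f)(x,y)
= 2^{n/2}\enumber^{E_L(x,y,b)}\,
f\!\left(\tfrac{(x-b)+\imunit y}{\sqrt{\al}}\right)
\]
with exponent $E_L(x,y,b)=-\tfrac12|x|^2-\tfrac12|y|^2-\imunit\langle x,y\rangle+\langle x,b\rangle+\imunit\langle y,b\rangle-\tfrac12|b|^2$.

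Next, working on the right-hand side, I would apply~\eqref{def:U_Fm_Hm} first and then~\eqref{eq:translation_Hm} to obtain
\[
(\rho_{\cH_m}(b)U_{\cF_{\al,m}}^{\cH_m}f)(x,y)
= 2^{n/2}\enumber^{-\frac12|x-b|^2-\frac12|y|^2-\imunit\langle x-b,y\rangle}\,
f\!\left(\tfrac{(x-b)+\imunit y}{\sqrt{\al}}\right).
\]
The $f$-factors already coincide, so the identity reduces to checking equality of the two scalar exponents. Expanding $-\tfrac12|x-b|^2=-\tfrac12|x|^2+\langle x,b\rangle-\tfrac12|b|^2$ and $-\imunit\langle x-b,y\rangle=-\imunit\langle x,y\rangle+\imunit\langle b,y\rangle$, and using that $\langle b,y\rangle=\langle y,b\rangle$ since $b,y\in\bR^n$, the exponent on the right becomes exactly $E_L(x,y,b)$.

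This identification of the exponents is the only step with any content, and it is a routine real-linear computation; no obstacle is expected. Since the equality holds pointwise on $\bR^{2n}$ for every $f\in\cF_{\al,m}$, it holds as an operator identity, proving~\eqref{eq:U_Fm_Hm_intertwines}.
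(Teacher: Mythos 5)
Your proposal is correct and follows essentially the same route as the paper's proof: apply both compositions to an arbitrary $f$, reduce to an equality of scalar exponents, and verify it by expanding $-\tfrac12|x-\sqrt{\al}\,a|^2$ and the real inner products. The computation checks out.
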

\begin{proof}
Let us prove~\eqref{eq:U_Fm_Hm_intertwines}.
Let $a\in\bR^n$ and $f\in\cF_{\al,m}$.
Following the rules from~\eqref{def:U_Fm_Hm} and~\eqref{eq:horizontal_Weyl}, we have
\begin{align*}
(U_{\cF_{\al,m}}^{\cH_m} \rho_{\cF_{\al,m}}(a)f)(x,y)
&=
2^{\frac{n}{2}}
\enumber^{-\frac{1}{2}|x|^2
-\frac{1}{2}|y|^2
-\imunit\,\langle x,y\rangle}
(\rho_{\cF_{\al,m}}(a) f)
\left(\frac{x+\imunit y}{\sqrt{\al}}\right)
\\
&=
2^{\frac{n}{2}}
\enumber^{-\frac{1}{2}|x|^2
-\frac{1}{2}|y|^2
-\imunit\,\langle x,y\rangle}
f\left(\frac{x+\imunit y}{\sqrt{\al}}-a\right)\enumber^{\al
\left\langle\frac{x+\imunit y}{\sqrt{\al}},a\right\rangle
-\frac{\al}{2}|a|^2}
\\
&=2^{\frac{n}{2}}
f\left(\frac{x+\imunit y}{\sqrt{\al}}-a\right)\enumber^{E_2(x,y)},
\end{align*}
where $E_2(x,y)$ is given by
\begin{align*}
E_2(x,y)
&=-\frac{1}{2}|x|^2
-\frac{1}{2}|y|^2
-\imunit\,\langle x,y\rangle
+\al
\left\langle\frac{x+\imunit y}{\sqrt{\al}},a\right\rangle
-\frac{\al}{2}|a|^2
\\
&=-\frac{1}{2}|x|^2
-\frac{1}{2}|y|^2
-\imunit\,\langle x-\sqrt{\al}a
+\sqrt{\al}a,y\rangle
+\langle x+\imunit y,\sqrt{\al}a\rangle
-\frac{\al}{2}|a|^2
\\
&=-\frac{1}{2}|x|^2
-\imunit \langle\sqrt{\al}a,y\rangle
+\sqrt{\al}\langle x+\imunit y,a\rangle
-\frac{\al}{2}|a|^2
-\frac{1}{2}|y|^2
-\imunit\,\langle x-\sqrt{\al}a,y\rangle
\\
&=-\frac{1}{2}|x|^2
-\imunit\langle\sqrt{\al}a,y\rangle
+\sqrt{\al}\langle x,a\rangle
+\sqrt{\al}\langle \imunit y,a\rangle
-\frac{\al}{2}|a|^2
-\frac{1}{2}|y|^2
-\imunit\,\langle x-\sqrt{\al}a,y\rangle
\\
&=
-\frac{1}{2}|x|^2
+\sqrt{\al}\langle x,a\rangle
-\frac{\al}{2}|a|^2
-\frac{1}{2}|y|^2
-\imunit\,\langle x
-\sqrt{\al}a,y\rangle
\\
&=
-\frac{1}{2}|x-\sqrt{\al}a|^2
-\frac{1}{2}|y|^2
-\imunit\,
\langle x-\sqrt{\al}a,y\rangle.
\end{align*}
So,
\begin{align*}
(U_{\cF_{\al,m}}^{\cH_m}
\rho_{\cF_{\al,m}}(a)f)(x,y)
&=2^{\frac{n}{2}}
f\left(\frac{x-\sqrt{\al}a+\imunit y}{\sqrt{\al}}\right)\enumber^{-\frac{1}{2}|x-\sqrt{\al}a|^2-\frac{1}{2}|y|^2
-\imunit\,\langle x-\sqrt{\al}a,y\rangle}
\\
&=
(U_{\cF_{\al,m}}^{\cH_m}f)(x-\sqrt{\al}a,y)
=
\bigl(\rho_{\cH_m}(\sqrt{\al}\,a) U_{\cF_{\al,m}}^{\cH_m}f\bigr)(x,y).
\qedhere
\end{align*}
\end{proof}

Proposition~\ref{prop:U_rho_F_eq_rho_H_U} means that for every $a$ in $\bR^n$,
the diagram on Figure~\ref{fig:U_rho_eq_rho_U} is commutative.
In other words,
$U_{\cF_{\al,m}}^{\cH_m}$ intertwines the representations $\rho_{\cF_{\al,m}}$
and $\rho_{\cH_m}$,
up to a dilation of the parameter.

\tikzset{spacenode/.style
={rounded corners, text centered,
text width=10ex, minimum size = 4ex,
draw, fill = lightgreen,
outer sep = 0.4ex}}

\tikzset{myedge/.style
={darkgreen, thick, -Stealth}}

\begin{figure}[hbt]
\centering
\begin{tikzpicture}
\node[spacenode] (Fm1)
  at (-3, 0) {$\cF_{\al,m}$};
\node[spacenode] (Hm1)
  at (3, 0) {$\cH_m$};
\draw [myedge] (Fm1) edge(Hm1);
\node at (0, 0) [above]
  {$U_{\cF_{\al,m}}^{\cH_m}$};
\node[spacenode] (Fm2)
  at (-3, -3) {$\cF_{\al,m}$};
\node[spacenode] (Hm2)
  at (3, -3) {$\cH_m$};
\draw [myedge]
  (Fm2) edge (Hm2);
\node at (0, -3) [above]
  {$U_{\cF_{\al,m}}^{\cH_m}$};
\draw [myedge]
  (Fm1) edge (Fm2);
\node at (-3, -1.5) [left]
   {$\rho_{\cF_{\al,m}}(a)$};
\draw [myedge]
  (Hm1) edge (Hm2);
\node at (3, -1.5) [right]
  {$\rho_{\cH_m}(\sqrt{\al}\,a)$};
\end{tikzpicture}
\caption{
\label{fig:U_rho_eq_rho_U}
Isometric isomorphism
$U_{\cF_{\al,m}}^{\cH_m}$
and unitary operators
$\rho_{\cF_{\al,m}}(a)$,
$\rho_{\cH_m}(\sqrt{\al}\,a)$.
}
\end{figure}

\begin{example}
Let $z=x+\imunit y\in\bC^n$.
Then, by~\eqref{eq:horizontal_Weyl},
\eqref{eq:translation_Hm}, and
\eqref{eq:UFH_K},
\begin{align*}
(\rho_{\cF_{\al,m}}(a)K^{\cF_{\al,m}}_z)(w) 
&= \enumber^{\al\langle w,a \rangle - \frac{1}{2}|a|^2} K^{\cF_{\al,m}}_z(w-a)\\
&=\enumber^{\al\langle w,a \rangle - \frac{1}{2}|a|^2}
\enumber^{\al\langle w-a,z\rangle}
L_{m-1}^{(n)}(\al|(w-a)-z|^2)\\
&= \enumber^{-\al\langle a,z\rangle- \frac{1}{2}|a|^2} \enumber^{\al\langle w,z+a\rangle}
L_{m-1}^{(n)}(\al|w-(z+a)|^2)\\
&= \enumber^{-\al\langle a,z\rangle- \frac{1}{2}|a|^2} K_{z+a}^{\cF_{\al,m}}(w).
\end{align*}
Shortly, this means that
\[
\rho_{\cF_{\al,m}}(a) K_z^{\cF_{\al,m}}
=
\enumber^{-\al\langle a,z\rangle
-\frac{\al}{2}|a|^2}
K_{z+a}^{\cF_{\al,m}}.
\]
Combining with Corollary~\ref{cor:UFH_K}, we get
\[
U_{\cF_{\al,m}}^{\cH_m} \rho_{\cF_{\al,m}}(a) K_z^{\cF_{\al,m}}
= 2^{-\frac{n}{2}}
\enumber^{\frac{\al}{2}|x|^2+\frac{\al}{2}|y|^2
-\imunit\al
\langle x,y\rangle}
K_{\sqrt{\al}(x+a),\sqrt{\al}y}^{\cH_m}
=\rho_{\cH_m}(\sqrt{\al}\,a)
U_{\cF_{\al,m}}^{\cH_m}
K_z^{\cF_{\al,m}}.
\]
This calculation verifies Proposition~\ref{prop:U_rho_F_eq_rho_H_U}.
\end{example}

\section{Fourier connection between Laguerre and Hermite functions}
\label{sec:Fourier_connection_Laguerre_Hermite}

In this section, we recall a well-known connection between Laguerre and Hermite functions.
It is given in terms of the Fourier--Plancherel transform $F$ over the group $\bR$.

For $n$ in $\bNz$,
the $n$th Hermite function $\psi_n$ is defined by
\begin{equation}\label{def:Hermite_function}
\psi_n(t)
\eqdef(2^n\,n!\,\sqrt{\pi})^{-\frac{1}{2}}
\enumber^{-\frac{1}{2}t^2} H_n(t),
\end{equation}
where $H_n$ is the $n$th (physicist's) Hermite polynomial.
It is well known (see, e.g.,
\cite[Chapter~V]{Szego1975}) that $(\psi_n)_{n=0}^\infty$ is an orthonormal basis of $L^2(\bR)$, as well as
$(\ell_n)_{n=0}^\infty$ is an orthonormal basis of $L^2(\bR_+)$. These two bases are related in the following manner.

\begin{prop}\label{prop:main_formula}
Let $n\in\bNz$.
Then for every $a,u$ in $\bR$,
\begin{equation}\label{eq:Fourier_Laguerre_inverse}
\ell_n(u^2+a^2)
= \frac{1}{\sqrt{2}} \int_\bR \enumber^{\imunit u\xi} 
\psi_n\left(\frac{\xi+a}{\sqrt{2}}\right)
\psi_n\left(\frac{\xi-a}{\sqrt{2}}\right)
\dif{}\xi.
\end{equation}
Equivalently,
for every $a,\xi$ in $\bR$,
\begin{equation}
\label{eq:Fourier_Laguerre_function_nice}
\frac{1}{\sqrt{2\pi}}
\int_\bR \enumber^{-\imunit u\xi}
\ell_n(u^2+a^2)\,\dif{}u
=
\sqrt{\pi}\,
\psi_n\left(\frac{\xi+a}{\sqrt{2}}\right)
\psi_n\left(\frac{\xi-a}{\sqrt{2}}\right).
\end{equation}
\end{prop}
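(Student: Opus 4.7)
The plan is to prove~\eqref{eq:Fourier_Laguerre_inverse} by the generating function method: I multiply both sides by $r^n$ and sum over $n\in\bNz$, then verify the resulting identity between analytic functions of $r\in(0,1)$; equating coefficients of $r^n$ recovers the stated formula. The two ingredients are the generating function~\eqref{eq:Laguerre_generating_function} (specialized to $\al=0$) together with the definition~\eqref{eq:Laguerre_function_def}, and Mehler's formula for the Hermite functions.

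From the Laguerre side one immediately obtains
\[
\sum_{n=0}^\infty \ell_n(x)\,r^n
= \frac{1}{1-r}\exp\left(-\frac{x(1+r)}{2(1-r)}\right).
\]
On the Hermite side, Mehler's formula
\[
\sum_{n=0}^\infty \psi_n(s)\psi_n(t)\,r^n
= \frac{1}{\sqrt{\pi(1-r^2)}}
\exp\left(\frac{2str-(s^2+t^2)(1+r^2)/2}{1-r^2}\right),
\]
evaluated at $s=(\xi+a)/\sqrt{2}$ and $t=(\xi-a)/\sqrt{2}$, yields $s^2+t^2=\xi^2+a^2$ and $2st=\xi^2-a^2$, and a short rearrangement factors the exponent as $-\frac{(1-r)}{2(1+r)}\xi^2-\frac{(1+r)}{2(1-r)}a^2$. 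Multiplying by $\enumber^{\imunit u\xi}/\sqrt{2}$, integrating over $\xi\in\bR$, and evaluating the resulting Gaussian via the identity $\int_\bR\enumber^{\imunit u\xi-\be\xi^2/2}\,\dif\xi=\sqrt{2\pi/\be}\,\enumber^{-u^2/(2\be)}$ with $\be=(1-r)/(1+r)$, the prefactors collapse (using $(1-r^2)(1-r)=(1+r)(1-r)^2$) to $1/(1-r)$ and the exponential becomes $\exp\bigl(-(u^2+a^2)(1+r)/(2(1-r))\bigr)$, matching the Laguerre generating function above.

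Equation~\eqref{eq:Fourier_Laguerre_function_nice} then follows from~\eqref{eq:Fourier_Laguerre_inverse} by Fourier inversion applied on the Schwartz class; the invariance of the Hermite product under $\xi\mapsto-\xi$ (each $\psi_n$ has parity $(-1)^n$, so the product is even) makes this unambiguous. The main technical point is the algebraic simplification of the Mehler exponent after substitution and the careful matching of Gaussian prefactors; I expect this, rather than any analytic subtlety, to be the bookkeeping obstacle. The exchange of sum and integral and the extraction of $r^n$ coefficients follow from absolute convergence and analyticity in $r\in(0,1)$, both controlled by classical Gaussian estimates on Mehler's kernel for $r$ in any compact subset of $(0,1)$.
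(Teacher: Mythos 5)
Your proof is correct, but it takes a genuinely different route from the paper. The paper computes the matrix element $\langle \psi_n,\rho(z)\psi_n\rangle$ of the displacement operator $\rho(z)=\enumber^{-|z|^2/2}\enumber^{z\ba^\dag}\enumber^{-\conj{z}\ba}$ in two ways: expanding the exponentials via the ladder relations $\ba\psi_n=\sqrt{n}\,\psi_{n-1}$, $\ba^\dag\psi_n=\sqrt{n+1}\,\psi_{n+1}$ yields $\ell_n(|z|^2)$, while the Baker--Campbell--Hausdorff formula realizes $\rho(z)$ as an explicit translation--modulation operator, turning the same matrix element into the Hermite integral; comparing the two gives~\eqref{eq:Fourier_Laguerre_inverse}. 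You instead sum over $n$ against $r^n$ and match the Laguerre generating function~\eqref{eq:Laguerre_generating_function} against Mehler's formula after a Gaussian integration in $\xi$; your algebra checks out (the exponent does factor as $-\tfrac{1-r}{2(1+r)}\xi^2-\tfrac{1+r}{2(1-r)}a^2$, and the prefactors do collapse to $\tfrac{1}{1-r}$), and the interchange of sum and integral is legitimately controlled by the diagonal Mehler bound $\sum_n r^n|\psi_n(s)\psi_n(t)|\le\tfrac12\sum_n r^n(\psi_n(s)^2+\psi_n(t)^2)$, which is a sum of two Gaussians in $\xi$. The trade-off: your argument imports Mehler's formula as a black box but is otherwise elementary bookkeeping with one Gaussian integral, whereas the paper's argument imports BCH and the ladder relations and is more structural, exhibiting the identity as two evaluations of a single coherent-state matrix element; both ingredients are classical, and the deduction of~\eqref{eq:Fourier_Laguerre_function_nice} by Fourier inversion on the Schwartz class is the same in either treatment (the parity remark is harmless but not needed).
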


See different proofs of Proposition~\ref{eq:Fourier_Laguerre_inverse} in Folland~\cite[Theorem 1.104]{Folland1989} and Thangavelu~\cite[Theorem~1.3.4]{Thangavelu1993}.
In the remaining of this section,
we give another short proof of this proposition based on some tools from the theory of quantum harmonic oscillator~\cite[Section 1.1]{CombescureRobert2012book}.
Our proof is inspired by some ideas presented in~\cite[Section 1.1, Lemma 5]{CombescureRobert2012book}. 

To begin with, we recall the quantum operators momentum $P$ and position $Q$,
which are differential operators defined in some dense subspace of $L^2(\bR)$. For us, it will be enough to consider these operators acting as follows in $\cS(\bR)$, the space of
complex-valued Schwartz functions on $\bR$:
\[
(Pf)(t)
\eqdef
-\imunit\, \frac{\dif{}}{\dif{}t}f(t),
\qquad
(Qf)(t)
\eqdef
t\,f(t)
\qquad (f\in\cS(\bR)).
\]
The operators annihilation $\ba$ and creation $\ba^\dag$ are defined in terms of $P$ and $Q$ as
\[
\ba f
\eqdef
\frac{1}{\sqrt{2}} (Q + \imunit P)f,
\qquad
\ba^\dag f 
\eqdef
\frac{1}{\sqrt{2}} (Q - \imunit P)f
\qquad (f\in\cS(\bR)).
\]
These are the ladder operators for the quantum harmonic oscillator.
Moreover, they have the following properties.

\begin{itemize}
\item $P$ and $Q$ behave as symmetric operators on $\cS(\bR)$:
\[
\langle Pf,g \rangle = \langle f, Pg\rangle,
\qquad
\langle Qf,g \rangle = \langle f, Qg\rangle
\qquad (f,g\in\cS(\bR)).
\]
\item
$\ba$ and $\ba^\dag$
behave as mutually adjoint on $\cS(\bR)$:
\begin{equation}
\label{eq:ba_adjoint_badag}
\langle \ba f,g\rangle
= \langle f,\ba^\dag g\rangle
\qquad(f,g\in\cS(\bR)).
\end{equation}
\item These two pairs of operators satisfy the canonical commutation relations:
\begin{equation}\label{eq:canonical_commut_rel} 
[P,Q]f=-\imunit f,\qquad
[\ba,\ba^\dag]f = f \qquad (f\in\cS(\bR)).
\end{equation}
\item Annihilation and creation operators applied to the $n$th Hermite function give the following recurrence formulas called
Segal--Bargmann representation of the canonical commutation relation:
\begin{equation}\label{eq:Hermite_creation_anihilation_rec}
\ba \psi_n 
= \sqrt{n}\,\psi_{n-1},
\qquad
\ba^\dag \psi_n
= \sqrt{n+1}\,\psi_{n+1}.
\end{equation}
\item Given $x$ in $\bR$ and $f$ in $\cS(\bR)$, we have the following representation of the translation and modulation operators in terms of $P$ and $Q$:
\begin{equation}\label{eq:translation_modulation_P_Q}
f(t+x) = \bigl( \enumber^{\imunit x P}f\bigr)(t),
\qquad
\enumber^{\imunit xt}f(t) = \big(\enumber^{\imunit xQ}f\big)(t).
\end{equation}
\item Let $z$ be any complex number. The operators $\enumber^{z\ba^\dag}$ and $\enumber^{\conj{z}\ba}$ are densely defined in $\cS(\bR)$, where they interact in the following manner:
    \[
    \langle f, \enumber^{z\ba^\dag}g\rangle
    = \langle \enumber^{\conj{z}\ba}f, g\rangle 
    \qquad (f,g\in\cS(\bR)).
    \]
\end{itemize}
\begin{lem}
For every $z=u+\imunit v$ in $\bC$, define the following operator in $\cS(\bR)$:
\[
\rho(z)\eqdef
\enumber^{-\frac{1}{2}|z|^2}\enumber^{z\ba^\dag}\enumber^{-\conj{z}\ba}.
\]
Then
\begin{equation}\label{eq:rep_rho_eval}
(\rho(u+\imunit v)f)(t) 
= \enumber^{-\imunit (u-\sqrt{2}t)v} f(t-\sqrt{2}u).
\end{equation}
\end{lem}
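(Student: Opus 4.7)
The plan is to reduce $\rho(z)$ to a composition of a translation and a modulation by applying the Baker--Campbell--Hausdorff identity twice, first at the level of $\ba,\ba^\dag$ and then at the level of $P,Q$, using the fact that in both pairs the commutator is a scalar and therefore commutes with everything.

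First I would use the canonical commutation relation $[\ba,\ba^\dag]=1$ from~\eqref{eq:canonical_commut_rel}: since $[z\ba^\dag,-\conj{z}\ba]=|z|^2$ is central, BCH gives
\[
\enumber^{z\ba^\dag}\enumber^{-\conj{z}\ba}
=\enumber^{z\ba^\dag-\conj{z}\ba+\frac{1}{2}|z|^2},
\]
so that the prefactor $\enumber^{-\frac{1}{2}|z|^2}$ in the definition of $\rho(z)$ cancels and
\[
\rho(z)=\enumber^{z\ba^\dag-\conj{z}\ba}.
\]
Next, writing $z=u+\imunit v$ and substituting $\ba=\tfrac{1}{\sqrt{2}}(Q+\imunit P)$, $\ba^\dag=\tfrac{1}{\sqrt{2}}(Q-\imunit P)$, a short calculation yields
\[
z\ba^\dag-\conj{z}\ba
= -\sqrt{2}\,\imunit u\,P+\sqrt{2}\,\imunit v\,Q.
\]

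Then I would apply BCH a second time to split this exponential. Using $[P,Q]=-\imunit$ from~\eqref{eq:canonical_commut_rel}, the commutator $[-\sqrt{2}\,\imunit u\,P,\sqrt{2}\,\imunit v\,Q]=-2\imunit uv$ is again central, so
\[
\enumber^{-\sqrt{2}\,\imunit u\,P+\sqrt{2}\,\imunit v\,Q}
=\enumber^{-\sqrt{2}\,\imunit u\,P}\,\enumber^{\sqrt{2}\,\imunit v\,Q}\,\enumber^{\imunit uv}.
\]
Finally, inserting the translation and modulation formulas~\eqref{eq:translation_modulation_P_Q} (the first factor translates by $-\sqrt{2}u$ and the second multiplies by $\enumber^{\sqrt{2}\imunit vt}$) gives
\[
(\rho(z)f)(t)
=\enumber^{\imunit uv}\enumber^{\sqrt{2}\imunit v(t-\sqrt{2}u)}f(t-\sqrt{2}u)
=\enumber^{-\imunit(u-\sqrt{2}t)v}f(t-\sqrt{2}u),
\]
which is~\eqref{eq:rep_rho_eval}.

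The only delicate step is the rigorous justification of BCH on the Schwartz space, since $\ba,\ba^\dag,P,Q$ are unbounded. The cleanest way is to interpret both BCH identities either (i) as equalities of formal power series applied term by term to $f\in\cS(\bR)$, noting that $\cS(\bR)$ is invariant under each of $P,Q,\ba,\ba^\dag$ and that the Taylor series converge in the Schwartz topology, or (ii) by first establishing the identities on the span of the Hermite basis (where~\eqref{eq:Hermite_creation_anihilation_rec} makes the action of $\enumber^{z\ba^\dag}$ and $\enumber^{-\conj{z}\ba}$ explicit) and extending by density. No other step is more than bookkeeping.
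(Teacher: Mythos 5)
Your proposal is correct and follows essentially the same route as the paper's proof: cancel the prefactor $\enumber^{-\frac{1}{2}|z|^2}$ via a first application of Baker--Campbell--Hausdorff, rewrite $z\ba^\dag-\conj{z}\ba$ as $-\sqrt{2}\imunit uP+\sqrt{2}\imunit vQ$, and split the exponential a second time into a translation and a modulation. You merely supply the intermediate commutator computations (all of which check out) and a remark on justifying BCH for these unbounded operators, which the paper leaves implicit.
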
    
\begin{proof}
Let $z=u+\imunit v$ in $\bC^n$ and $f$ be a function in $\cS(\bR)$. Then
\[
(\rho(u+\imunit v)f)(t)
= \left(\enumber^{-\frac{1}{2}|z|^2}\enumber^{z\ba^\dag}\enumber^{-\conj{z}\ba}f\right)(t)
= \left(\enumber^{z\ba^\dag - \conj{z}\ba}f\right)(t).
\]
The second equality holds by using the Baker--Campbell--Hausdorff formula (see, for example~\cite{BonfiglioliFulci2011,CombescureRobert2012book}) 
and the canonical commutation relations~\eqref{eq:canonical_commut_rel}.
It is easy to see that $z\ba^\dag - \conj{z}\ba$ is equivalent to $-\sqrt{2}\imunit uP+ \sqrt{2}\imunit vQ$. 
Substituting, applying~\eqref{eq:translation_modulation_P_Q} and the Baker--Cambell--Hausdorff formula again, we conclude~\eqref{eq:rep_rho_eval}.
\end{proof}

Let us now turn to the proof of the main result of this section.
\begin{proof}[Proof of Proposition~\ref{prop:main_formula}]
We compute the inner product $\langle \psi_n, \rho(z)\psi_n \rangle$ in two different ways and then we compare one to the other. First,
expanding the exponential operators and applying repeatedly~\eqref{eq:Hermite_creation_anihilation_rec} we get
\begin{align*}
    \langle \psi_n, \rho(z)\psi_n \rangle
    &= \enumber^{-\frac{1}{2}|z|^2} \langle \enumber^{\conj{z} \ba} \psi_n,
    \enumber^{-\conj{z}\ba} \psi_n \rangle\\
    &= \enumber^{-\frac{1}{2}|z|^2} \sum_{k=0}^n \sum_{j=0}^n
    \sqrt{\binom{n}{k}\binom{n}{j}k!j!}\, 
    \frac{(\conj{z})^k(-z)^j}{(k!)(j!)}
    \langle \psi_{n-k}, \psi_{n-j} \rangle.
\end{align*}
Since Hermite functions are orthonormal, the double sum collapses to the sum over the diagonal,
which can be expressed in terms of the Laguerre function~\eqref{eq:Laguerre_function_def}:
\[
\langle \psi_n, \rho(z)\psi_n \rangle
=
\enumber^{-\frac{1}{2}|z|^2} \sum_{k=0}^n
\binom{n}{k}\, 
\frac{(-1)^k|z|^{2k}}{k!}
= \ell_n(|z|^2).
\]
So, on the one hand,
for $z = a-\imunit u$, we have 
\begin{equation}\label{eq:proof_main_formula_LHS}
\langle \psi_n, \rho(a-\imunit u)\psi_n \rangle 
= \ell_n(u^2+a^2).
\end{equation}
On the other hand, by~\eqref{eq:rep_rho_eval},
\[
\langle \psi_n, \rho(a-\imunit u)\psi_n \rangle
= \int_\bR \psi_n(\eta)\,\enumber^{\imunit ua}
\enumber^{\sqrt{2}\imunit u(\eta-\sqrt{2}a)}\psi_n(\eta-\sqrt{2}a)\dif{}\eta.
\]
Setting $\xi+a = \sqrt{2}\eta$, we get 
\begin{equation}\label{eq:proof_main_formula_RHS}
    \langle \psi_n, \rho(a-\imunit u)\psi_n \rangle
    = \frac{1}{\sqrt{2}}\enumber^{\imunit ua} 
    \int_\bR \enumber^{\imunit  u(\xi-a)}
    \psi_n\left(\frac{\xi+a}{\sqrt{2}}\right)\,
    \psi_n\left(\frac{\xi-a}{\sqrt{2}}\right)\dif{}\xi.
\end{equation}
Finally, simplifying the exponential part and comparing~\eqref{eq:proof_main_formula_LHS} and~\eqref{eq:proof_main_formula_RHS}, we obtain~\eqref{eq:Fourier_Laguerre_inverse}.
\end{proof}

\section{Horizontal Fourier transform of the reproducing kernel}
\label{sec:Fourier_transform_of_reproducing_kernel}

Following the scheme from~\cite{HerreraMaximenkoRamos2022}, now we treat the domain $\bR^{2n}$ as the product $G\times Y$, where
\begin{itemize}
\item $G$ is the group $\bR^n$ provided with the measure
$\tmu_n \eqdef (2\pi)^{-n/2}\mu_n$,
\item $Y$ is the measure space $(\bR^n,\tmu_n)$.
\end{itemize}
The dual group $\widehat{\bR^n}$
is identified with $\bR^n$ using the pairing
$(x,\xi)\mapsto\enumber^{\imunit \langle x,\xi\rangle}$.
The dual Haar measure is also $\tmu_n$.
Let $F$ denote the Fourier--Plancherel transform
$L^2(\bR^n,\tmu_n)\to L^2(\bR^n,\tmu_n)$.
We denote by $L_{\cdot,y}(v)$
the Fourier transform of $K^{\cH_m}_{0,y}(\cdot,v)$:
\[
L_{\xi,y}(v)
\eqdef (F (K^{\cH_m}_{0,y}(\cdot,v)))(\xi),
\]
i.e.,
\begin{equation}
\label{eq:def_L}
L_{\xi,y}(v)
\eqdef
\frac{1}{(2\pi)^{n/2}}
\int_{\bR^n} K^{\cH_m}_{0,y}(u,v)
\,\enumber^{-\imunit \langle u,\xi\rangle}\,\dif\mu_n(u).
\end{equation}

\begin{thm}
\label{thm:L_formula}
For every $\xi,y,v$ in $\bR^n$,
\begin{equation}
\label{eq:L_formula}
L_{\xi,y}(v)
=
\sum_{k\in J_{n,m}}\;
\overline{q_{k,\xi}(y)}
q_{k,\xi}(v)
=
\sum_{k\in J_{n,m}}\;
q_{k,\xi}(y)
q_{k,\xi}(v),
\end{equation}
where
\begin{equation}
\label{eq:q_formula}
q_{k,\xi}(v)
\eqdef 
2^{n/2}\pi^{n/4}
\prod_{r=1}^n
\psi_{k_r}\left(\frac{\xi_r+2v_r}{\sqrt{2}}\right).
\end{equation}
\end{thm}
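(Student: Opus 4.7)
The plan is to start from the product decomposition of $K^{\cH_m}_{0,y}$ given in
equation~\eqref{eq:K_Hm_via_sum_of_products_of_Laguerre_functions} with $x=0$,
which reads
\[
K_{0,y}^{\cH_m}(u,v)
= 2^n \sum_{k\in J_{n,m}} \prod_{r=1}^n
\enumber^{-\imunit u_r(v_r+y_r)}\,
\ell_{k_r}\bigl(u_r^2+(v_r-y_r)^2\bigr).
\]
Because this expression is a sum of tensor products in the variables
$u_1,\ldots,u_n$, the $n$-dimensional Fourier integral in the definition~\eqref{eq:def_L}
of $L_{\xi,y}(v)$ splits into a product of $n$ one-dimensional Fourier integrals.
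Thus the whole computation reduces to evaluating, for each coordinate $r$,
the single integral
\[
I_r \eqdef
\frac{1}{\sqrt{2\pi}}
\int_{\bR}
\enumber^{-\imunit u_r\xi_r}\,
\enumber^{-\imunit u_r(v_r+y_r)}\,
\ell_{k_r}\bigl(u_r^2+(v_r-y_r)^2\bigr)\,\dif u_r.
\]

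The key observation is that the modulation factor $\enumber^{-\imunit u_r(v_r+y_r)}$
merges with the Fourier exponential into a single exponential
$\enumber^{-\imunit u_r\eta_r}$ with frequency $\eta_r\eqdef\xi_r+v_r+y_r$.
Setting $a_r\eqdef v_r-y_r$, the integral $I_r$ fits exactly into the template of
Proposition~\ref{prop:main_formula}, formula~\eqref{eq:Fourier_Laguerre_function_nice},
giving
\[
I_r = \sqrt{\pi}\,
\psi_{k_r}\!\left(\frac{\eta_r+a_r}{\sqrt{2}}\right)
\psi_{k_r}\!\left(\frac{\eta_r-a_r}{\sqrt{2}}\right).
\]
Now the crucial and very clean cancellation: $\eta_r+a_r=\xi_r+2v_r$ and $\eta_r-a_r=\xi_r+2y_r$. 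Hence
\[
I_r=\sqrt{\pi}\,
\psi_{k_r}\!\left(\frac{\xi_r+2v_r}{\sqrt{2}}\right)
\psi_{k_r}\!\left(\frac{\xi_r+2y_r}{\sqrt{2}}\right),
\]
and the dependence on $v$ and $y$ has decoupled into a product of two factors
of the same shape, which is precisely what the definition~\eqref{eq:q_formula}
of $q_{k,\xi}$ is designed to absorb.

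Assembling the pieces: the product over $r=1,\ldots,n$ produces the factor
$\pi^{n/2}$ and the two products of Hermite functions that form
$q_{k,\xi}(v)$ and $q_{k,\xi}(y)$, and the overall prefactor $2^n$ from the
kernel combined with the normalization $2^{n/2}\pi^{n/4}$ inside each $q_{k,\xi}$
matches exactly: $2^n \pi^{n/2}=\bigl(2^{n/2}\pi^{n/4}\bigr)^2$. This yields the first
equality in~\eqref{eq:L_formula}; the second equality follows immediately because
the Hermite functions $\psi_{k_r}$ are real-valued, hence $q_{k,\xi}(y)$ is real
and equals its conjugate.

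I do not expect a genuine obstacle here: the identity is essentially a bookkeeping
consequence of three previous results (the product decomposition of the kernel,
Fubini to split the multidimensional Fourier transform, and the one-dimensional
Laguerre--Hermite Fourier identity). The only place where one has to be careful
is the algebraic step of absorbing the modulation into the Fourier exponential
and verifying that the arguments of the two resulting Hermite functions
simplify symmetrically to $(\xi_r+2v_r)/\sqrt{2}$ and $(\xi_r+2y_r)/\sqrt{2}$;
this is the combinatorial pivot that makes the product factor cleanly as
$q_{k,\xi}(v)q_{k,\xi}(y)$ and is precisely what motivates the particular
normalization chosen in the definition of $q_{k,\xi}$.
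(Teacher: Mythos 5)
Your proof is correct and follows essentially the same route as the paper: expand $K^{\cH_m}_{0,y}$ via the Laguerre-function product decomposition, split the $n$-dimensional Fourier integral by Fubini, apply the one-dimensional Laguerre--Hermite identity~\eqref{eq:Fourier_Laguerre_function_nice} with $a=v_r-y_r$ and frequency $\xi_r+v_r+y_r$, and collect the constants $2^n\pi^{n/2}=\bigl(2^{n/2}\pi^{n/4}\bigr)^2$. The symmetric simplification of the Hermite arguments to $(\xi_r+2v_r)/\sqrt{2}$ and $(\xi_r+2y_r)/\sqrt{2}$, and the realness of $q_{k,\xi}$ giving the second equality, are exactly the paper's steps.
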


\begin{proof}
Notice that
\[
K_{0,y}^{\cH_m}(u,v)
= 2^n
\sum_{k\in J_{n,m}}\;
\prod_{r=1}^n 
\enumber^{-\imunit u_r(v_r+y_r)}\,
\ell_{k_r}(u_r^2+(v_r-y_r)^2).
\]
We apply the Fourier transform to this function and denote by $I_{\xi,y,v,k}$ the $k$th summand:
\[
L_{\xi,y}(v)
=
\sum_{k\in J_{n,m}}\;
I_{\xi,y,v,k},
\]
where
\[
I_{\xi,y,v,k}
\eqdef
\frac{2^n}{(2\pi)^{n/2}}
\int_{\bR^n}
\enumber^{-\imunit\, \langle u,\xi\rangle}
\left(
\prod_{r=1}^n
\enumber^{-\imunit u_r(v_r+y_r)}\,
\ell_{k_r}(u_r^2+(v_r-y_r)^2)
\right)\,\dif\mu_n(u).
\]
By Fubini's theorem, the integral over $\bR^n$ decomposes into a product of $n$ integrals:
\[
I_{\xi,y,v,k}
=
\prod_{r=1}^n
\left(
\frac{2}{\sqrt{2\pi}}
\int_{\bR}
\enumber^{-\imunit u_r (\xi_r+v_r+y_r)}
\ell_{k_r}(u_r^2+(v_r-y_r)^2)
\,\dif{}u_r\right).
\]
Finally, we apply~\eqref{eq:Fourier_Laguerre_function_nice} with $v_r-y_r$ instead of $a$ and $\xi_r+y_r+v_r$ instead of $\xi$:
\begin{align*}
I_{\xi,y,v,k}
&=
\prod_{r=1}^n
\left(
2\sqrt{\pi}\,
\psi_{k_r}\left(\frac{\xi_r+y_r+v_r+v_r-y_r}{\sqrt{2}}\right)
\psi_{k_r}\left(\frac{\xi_r+y_r+v_r-v_r+y_r}{\sqrt{2}}\right)
\right)
\\
&=
\prod_{r=1}^n
\left(
2\sqrt{\pi}\,
\psi_{k_r}\left(\frac{\xi_r+2v_r}{\sqrt{2}}\right)
\psi_r\left(\frac{\xi_r+2y_r}{\sqrt{2}}\right)
\right)
=
q_{k,\xi}(y) q_{k,\xi}(v).
\qedhere
\end{align*}
\end{proof}

\begin{prop}\label{prop:q_orthonormal_in_L2}
For every $\xi$ in $\bR^n$,
$(q_{k,\xi})_{|k|\le m-1}$
is an orthonormal list of functions in $L^2(\bR^n,\tmu_n)$.
\end{prop}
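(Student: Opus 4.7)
The plan is to compute the inner product $\langle q_{k,\xi}, q_{j,\xi}\rangle_{L^2(\bR^n,\tmu_n)}$ directly and reduce it, via Fubini and a linear change of variables, to a product of one-dimensional Hermite inner products.

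First, I would substitute the definition~\eqref{eq:q_formula} and pull the constants out, writing
\[
\langle q_{k,\xi}, q_{j,\xi}\rangle_{L^2(\bR^n,\tmu_n)}
=
\frac{2^n\pi^{n/2}}{(2\pi)^{n/2}}
\int_{\bR^n}
\prod_{r=1}^n
\psi_{k_r}\!\left(\tfrac{\xi_r+2v_r}{\sqrt{2}}\right)
\psi_{j_r}\!\left(\tfrac{\xi_r+2v_r}{\sqrt{2}}\right)
\dif\mu_n(v).
\]
The scalar prefactor simplifies to $2^{n/2}$. Since the integrand is a product of functions each depending on a single coordinate, Fubini's theorem factors the integral as a product of $n$ one-dimensional integrals.

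Next, in the $r$th factor I would perform the affine change of variable $t_r=(\xi_r+2v_r)/\sqrt{2}$, which gives $\dif v_r = \dif t_r/\sqrt{2}$. Each one-dimensional integral reduces to
\[
\frac{1}{\sqrt{2}}\int_{\bR}\psi_{k_r}(t_r)\psi_{j_r}(t_r)\,\dif t_r
= \frac{1}{\sqrt{2}}\,\delta_{k_r,j_r},
\]
using the well-known orthonormality of the Hermite functions $(\psi_n)_{n\in\bNz}$ in $L^2(\bR)$ recalled before Proposition~\ref{prop:main_formula}. Multiplying the $n$ factors yields $2^{-n/2}\prod_{r=1}^n \delta_{k_r,j_r}$, and combining with the prefactor $2^{n/2}$ gives $\prod_{r=1}^n \delta_{k_r,j_r}=\delta_{k,j}$.

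There is no real obstacle here: the statement is an immediate consequence of the orthonormality of Hermite functions together with the Jacobian bookkeeping. The only thing to be careful about is matching the factor $(2\pi)^{-n/2}$ coming from $\tmu_n$ against the constant $2^{n/2}\pi^{n/4}$ in the definition of $q_{k,\xi}$ and the Jacobian $\sqrt{2}$ of the substitution, so that everything collapses cleanly to the Kronecker delta.
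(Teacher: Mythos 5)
Your proposal is correct and is essentially the paper's own proof: both expand the definition of $q_{k,\xi}$, factor the integral over $\bR^n$ into a product of one-dimensional integrals, and apply the substitution $t_r=(\xi_r+2v_r)/\sqrt{2}$ together with the orthonormality of the Hermite functions, with the constants $2^{n/2}\pi^{n/4}$, $(2\pi)^{-n/2}$, and the Jacobian $1/\sqrt{2}$ cancelling exactly as you describe. No gaps.
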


\begin{proof}
Let $k,p\in\bNz^n$.
By the orthonormality of the Hermite functions,
\begin{align*}
\langle q_{k,\xi},q_{p,\xi}\rangle_{L^2(\bR^n,\tmu_n)}
&=
\frac{2^n\pi^{n/2}}{(2\pi)^{n/2}}
\prod_{r=1}^n
\left(
\int_{\bR}
\psi_{k_r}\left(\frac{\xi_r+2v_r}{\sqrt{2}}\right)
\psi_{p_r}\left(\frac{\xi_r+2v_r}{\sqrt{2}}\right)
\,\dif{}v_r
\right)
\\
&=
\prod_{r=1}^n
\int_\bR
\psi_{k_r}(t_r)
\psi_{p_r}(t_r)\,
\dif{}t_r
=
\prod_{r=1}^n
\de_{k_r,p_r}
=\de_{k,p}.
\qedhere
\end{align*}
\end{proof}

\section{Horizontal Fourier decomposition of the spaces}
\label{sec:decomposition_of_spaces}

In this section we decompose the spaces $\cH_m$ and $\cF_{\al,m}$ into finitely many copies of $L^2(\bR^n,\tmu_n)$, following the scheme from~\cite{HerreraMaximenkoRamos2022}.

Recall $J_{n,m}$ is defined in~\eqref{eq:J_def}.
From now on, we briefly write $d$ instead of $d_{n,m}$.
We enumerate the elements of $J_{n,m}$ in the lexicographic order. Let $\phi\colon\{1,\ldots,d\}\to J_{n,m}$ be the corresponding bijection.
For example, for $n=2$ and $m=3$,
\begin{align*}
&\phi(1)=(0,0),\quad
\phi(2)=(0,1),\quad
\phi(3)=(0,2),\\
&\phi(4)=(1,0),\quad
\phi(5)=(1,1),\quad
\phi(6)=(2,0).
\end{align*}
Hence we can write
\begin{equation}\label{eq:L_new_form}
L_{\xi,y}(v) 
= \sum_{k\in J_{n,m}} q_{k,\xi}(y)q_{k,\xi}(v)
=\sum_{j=1}^d q_{\phi(j),\xi}(y)q_{\phi(j),\xi}(v).
\end{equation}

Let $I$ be the identity operator in $L^2(\bR^n,\tmu_n)$.
We denote by $\widehat{\cH}_m$ the image of $\cH_m$ under the operator $F\otimes I$.
This is a closed subspace of $L^2(\bR^{2n},\tmu_{2n})$.

For every $\xi$ in $\bR^n$, we define
\[
\widehat{\cH}_{m,\xi} 
\eqdef \operatorname{span}\big(\{q_{\phi(j),\xi}\::\:j=1,\ldots,d\}\big).
\]
By Proposition~\ref{prop:q_orthonormal_in_L2}, the list $(q_{\phi(j),\xi})_{j=1}^d$ is an orthonormal basis of $\widehat{\cH}_{m,\xi}$.
This space is an RKHS 
whose reproducing kernel is $(L_{\xi,y})_{y\in\bR^n}$ given by~\eqref{eq:L_new_form}. 

According to~\cite{HerreraMaximenkoRamos2022}, 
$\widehat{\cH}_m$ decomposes into the direct integral
\[
\widehat{\cH}_m 
=
\int_{\bR^n}^\oplus \widehat{\cH}_{m,\xi}\, \dif\tmu_n(\xi).
\]
Every ``fiber''
$\widehat{\cH}_{m,\xi}$
has dimension $d$.
Using the orthonormal basis $(q_{\phi(j)},\xi)_{j=1}^d$,
we can construct an isometric isomorphism
$\widehat{\cH}_{m,\xi}\to\bC^d$
in a standard way.
Then, 
\[
\int_{\bR^n}^\oplus \bC^d \,\dif\tmu_n(\xi)
= L^2(\bR^n,\,\tmu_n)\otimes \bC^d
= L^2(\bR^n, \tmu_n, \bC^d)
= L^2(\bR^n,\,\tmu_n)^d.
\]
We identify $L^2(\bR^n,\tmu_n,\bC^d)$ with $L^2(\bR^n,\tmu_n)^d$ in the obvious way.
Define $N\colon\widehat{\cH}_m\to L^2(\bR^n,\,\widetilde{\mu}_n)^d$ 
as follows: for every $j$ in $\{1,\ldots,d\}$,
\[
(N g)(\xi)_j
= \langle g(\xi, \cdot), q_{\phi(j),\xi}\rangle_{L^2(\bR^n,\tmu_n)}
= \frac{1}{(2\pi)^{n/2}}\int_{\bR^n} g(\xi,v) q_{\phi(j),\xi}(v)\, \dif{}\mu_n(v)
\quad
(\xi\in\bR^n).
\]
Then $N$ is an isometric isomorphism.
Its adjoint (inverse) operator is
\[
(N^\ast h)(\xi, y) = \sum_{j=1}^{d} q_{\phi(j),\xi}(y)h_j(\xi)
\qquad
(h\in L^2(\bR^n,\tmu_n)^d). 
\]
In what follows, we construct isometric isomorphisms from $\cH_m$ and $\cF_{\al,m}$ onto
$L^2(\bR^n,\,\tmu_n)^d$ using~\eqref{eq:q_formula}.
First, let $R_{\cH_m}\colon\cH_m\to L^2(\bR^n,\,\tmu_n)^d$ be defined as $R_{\cH_m} \eqdef N(F\otimes I)$. 

\begin{prop}
$R_{\cH_m}$ is an isometric isomorphism.
If $g\in\cH_m$ and $\|g\|_{L^1(\bR^{2n})}<+\infty$, then
\begin{equation}\label{eq:R_Hm_explicit}
(R_{\cH_m}g)(\xi)_j
=
\frac{1}{2^{n/2}\pi^{3n/4}}\,\int_{\bR^{2n}} g(u,v) 
\enumber^{-\imunit \langle u, \xi\rangle}\,
\prod_{r=1}^n
\psi_{\phi(j)_r}\left(\frac{\xi_r+2v_r}{\sqrt{2}}\right)
\dif{}\mu_{2n}(u,v).
\end{equation}
\end{prop}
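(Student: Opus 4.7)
My plan is to split the proposition into two independent parts: first, the isometric isomorphism property of $R_{\cH_m}$; second, the explicit integral representation under the extra $L^1$ hypothesis.

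The first part should be essentially free from the construction preceding the statement. Since $R_{\cH_m}=N(F\otimes I)$ by definition, I would just observe that $F\otimes I$ is unitary on $L^2(\bR^{2n},\tmu_{2n})$ and, by the very definition of $\widehat{\cH}_m$ as the image of $\cH_m$ under $F\otimes I$, restricts to an isometric isomorphism $\cH_m\to\widehat{\cH}_m$. The operator $N$ has already been shown to be an isometric isomorphism $\widehat{\cH}_m\to L^2(\bR^n,\tmu_n)^d$. The composition of two isometric isomorphisms is again one of the same kind, which settles this part.

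For the explicit formula, I will apply the two defining integral formulas in succession. Under the hypothesis $g\in L^1(\bR^{2n})\cap L^2(\bR^{2n},\tmu_{2n})$, the Fourier--Plancherel transform in the horizontal variable agrees a.e.\ with the ordinary Lebesgue integral
\[
((F\otimes I)g)(\xi,v)=\frac{1}{(2\pi)^{n/2}}\int_{\bR^n} g(u,v)\enumber^{-\imunit\langle u,\xi\rangle}\,\dif\mu_n(u),
\]
and substituting this into the defining formula for $N$ produces an iterated double integral. Each Hermite function $\psi_k$ is uniformly bounded on $\bR$, so the factor $q_{\phi(j),\xi}(v)$ given by~\eqref{eq:q_formula} is bounded by a constant depending only on $j$ and $n$; thus the combined integrand is dominated by a multiple of $|g(u,v)|$ and lies in $L^1(\bR^{2n})$. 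Fubini's theorem then justifies collapsing the two integrations into a single one over $\bR^{2n}$, and substituting~\eqref{eq:q_formula} together with the arithmetic simplification $2^{n/2}\pi^{n/4}/(2\pi)^n=1/(2^{n/2}\pi^{3n/4})$ produces~\eqref{eq:R_Hm_explicit}.

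There is no genuine obstacle here; the only mildly delicate analytic point will be moving from $F\otimes I$ viewed as an $L^2$-Plancherel operator to the pointwise integral formula and interchanging the order of integration. Both issues are resolved by the same two observations: that $g\in L^1\cap L^2$ by hypothesis, and that $\sup_{t\in\bR}|\psi_k(t)|<\infty$ for each fixed $k$.
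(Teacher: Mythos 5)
Your proposal is correct and follows essentially the same route as the paper, whose proof is just the one-line composition of the defining integral formulas for $F\otimes I$ and $N$; you additionally spell out the routine justifications (the $L^1\cap L^2$ hypothesis for the pointwise Fourier formula, boundedness of the Hermite functions for Fubini, and the constant $2^{n/2}\pi^{n/4}/(2\pi)^n=2^{-n/2}\pi^{-3n/4}$) that the paper leaves implicit.
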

\begin{proof}
    In our scheme, for every $j$ in $\{1,\ldots,d\}$,
    \[
    (R_{\cH_m}g)(\xi)_j
    = \frac{1}{(2\pi)^{n/2}}\int_{\bR^n}\left( 
    \frac{1}{(2\pi)^{n/2}}\int_{\bR^n} g(u,v)\enumber^{-\imunit \langle u, \xi\rangle}\dif\mu_n(u) \right) \conj{q_{\phi(j),\xi}(v)}\,\dif\mu_n(v).
    \qedhere
    \]
\end{proof}

Additionally, we define $R_{\cF_{\al,m}}\colon\cF_{\al,m}\to L^2(\bR^n,\,\tmu_n)^d$ as $R_{\cF_{\al,m}}\eqdef R_{\cH_m}U_{\cF_{\al,m}}^{\cH_m}$,
see~\eqref{def:U_Fm_Hm}.

\begin{prop}
$R_{\cF_{\al,m}}$ is an isometric isomorphism.
If $f\in\cF_{\al,m}$ and
\[
\int_{\bC^n}|f(z)| \enumber^{-\frac{1}{2}|z|^2} \dif\mu_{2n}(z) < +\infty,
\]
then, for every $j$ in $\{1,\ldots,d\}$,
\begin{equation}\label{eq:R_Fm_explicit}
(R_{\cF_{\al,m}}f)(\xi)_j 
= \frac{1}{\pi^{3n/4}}\,\int_{\bR^{2n}} 
    f\left(\frac{u+\imunit v}{\al}\right) 
    \enumber^{-\frac{1}{2}|u|^2-\frac{1}{2}|v|^2+\imunit\langle u,v-\xi \rangle}\, 
    \prod_{r=1}^n \psi_{\phi(j)_r}\left(\frac{\xi_r+2v_r}{\sqrt{2}}\right)
    \dif{}\mu_{2n}(u,v).
\end{equation}
\end{prop}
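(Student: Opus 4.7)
The isometric isomorphism property is immediate from the definition $R_{\cF_{\al,m}}\eqdef R_{\cH_m}U_{\cF_{\al,m}}^{\cH_m}$: the previous proposition establishes that $R_{\cH_m}$ is an isometric isomorphism between $\cH_m$ and $L^2(\bR^n,\tmu_n)^d$, while $U_{\cF_{\al,m}}^{\cH_m}$ is an isometric isomorphism between $\cF_{\al,m}$ and $\cH_m$ by the proposition after~\eqref{def:U_Fm_Hm}. Their composition is therefore an isometric isomorphism.

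For the explicit integral formula, the plan is to substitute $g\eqdef U_{\cF_{\al,m}}^{\cH_m}f$ into~\eqref{eq:R_Hm_explicit}. Using~\eqref{def:U_Fm_Hm},
\[
g(u,v)=2^{n/2}\enumber^{-\frac{1}{2}|u|^2-\frac{1}{2}|v|^2-\imunit\langle u,v\rangle}
f\!\left(\frac{u+\imunit v}{\sqrt{\al}}\right),
\]
and substitution into~\eqref{eq:R_Hm_explicit} gathers a factor $2^{n/2}/(2^{n/2}\pi^{3n/4})=1/\pi^{3n/4}$ together with the combined exponential
\(\enumber^{-\frac{1}{2}|u|^2-\frac{1}{2}|v|^2-\imunit\langle u,v\rangle-\imunit\langle u,\xi\rangle}
=\enumber^{-\frac{1}{2}|u|^2-\frac{1}{2}|v|^2+\imunit\langle u,v-\xi\rangle}\cdot\enumber^{-2\imunit\langle u,v\rangle}\),
so I must keep track of the signs carefully to recover~\eqref{eq:R_Fm_explicit} (which in the statement should read $\sqrt{\al}$ in the argument of $f$, matching the rest of the paper). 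The remaining step is to justify the application of~\eqref{eq:R_Hm_explicit}, i.e., to verify the integrability hypothesis $\|g\|_{L^1(\bR^{2n})}<+\infty$. For this, apply the change of variables $z=(u+\imunit v)/\sqrt{\al}$, which sends $\dif\mu_{2n}(u,v)$ to $\al^{n}\dif\mu_{2n}(z)$ and converts $\enumber^{-\frac{1}{2}|u|^2-\frac{1}{2}|v|^2}$ into $\enumber^{-\frac{\al}{2}|z|^2}$, so the hypothesis on $f$ translates directly into $\|g\|_{L^1}<+\infty$.

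The only place where genuine care is needed is the bookkeeping of exponential factors, powers of $2$, and powers of $\pi$ when feeding~\eqref{def:U_Fm_Hm} into~\eqref{eq:R_Hm_explicit}; everything else is formal. No new analytic input is required, since Fubini and the dominated convergence argument needed to justify bringing the Fourier transform inside the integral are already implicit in the statement of~\eqref{eq:R_Hm_explicit}, which we are simply reusing.
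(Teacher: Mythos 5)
Your proof takes exactly the paper's route: the paper's own argument consists of nothing more than substituting $U_{\cF_{\al,m}}^{\cH_m}f$ for $g$ in~\eqref{eq:R_Hm_explicit} and collecting the constant $2^{n/2}\cdot 2^{-n/2}\pi^{-3n/4}=\pi^{-3n/4}$, with the isometry of $R_{\cF_{\al,m}}$ coming from the composition of the two previously established isometric isomorphisms. The discrepancies you flag are typos in the printed statement rather than errors on your part --- the argument of $f$ should indeed be $(u+\imunit v)/\sqrt{\al}$, and the substitution produces the exponent $-\imunit\langle u,v+\xi\rangle$ rather than $+\imunit\langle u,v-\xi\rangle$ (precisely your factor $\enumber^{-2\imunit\langle u,v\rangle}$) --- and your verification of the $L^1$ hypothesis via the change of variables $z=(u+\imunit v)/\sqrt{\al}$ is a detail the paper omits.
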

\begin{proof}
Substituting $U_{\cF_{\al,m}}^{\cH_m}f$ instead of $g$ in~\eqref{eq:R_Hm_explicit}, we get
\[
(R_{\cH_m}U_{\cF_{\al,m}}^{\cH_m}f)(\xi)_j
= \frac{2^{n/2}\pi^{n/4}}{(2\pi)^n}\,
    \int_{\bR^{2n}} (U_{\cF_{\al,m}}^{\cH_m}f)(u,v) 
    e^{-\imunit \langle u, \xi\rangle}\,\prod_{r=1}^n
    \psi_{\phi(j)_r}\left(\frac{\xi_r+2v_r}{\sqrt{2}}\right)
    \dif{}\mu_{2n}(u,v). \qedhere
\]
\end{proof}
For $n=1$ and $\al=1$, the dimension $d$ reduces to $m$ and formula~\eqref{eq:R_Fm_explicit} reads as
\[
(R_{\cF_{1,m}}f)(\xi)_j 
= \frac{1}{\pi^{3/4}}\,\int_{\bR^2} 
    f(u+\imunit v) 
    \enumber^{-\frac{1}{2}|u|^2-\frac{1}{2}|v|^2+\imunit u(v-\xi)}\, 
    \psi_j\left(\frac{\xi+2v}{\sqrt{2}}\right)
    \dif{}\mu_2(u,v).
\]
This formula is similar to the one constructed in~\cite[Section 2]{SanchezGonzalezLopezArroyo2018}.

\begin{example}
\label{example:R_K}
Let $y\in\bR^n$.
We consider
$K_{\imunit y}^{\cF_{\al,m}}\in\cF_{\al,m}$.
Let us compute the corresponding functions in $\cH_m$,
$\widehat{\cH}_m$, and $L^2(\bR^n,\tmu_n)^d$.
By~\eqref{eq:UFH_K},
\[
U_{\cF_{\al,m}}^{\cH_m}
K_{\imunit y}^{\cF_{\al,m}}
= 2^{-\frac{n}{2}}
\enumber^{\frac{\al}{2}|y|^2}K_{0,\sqrt{\al} y}^{\cH_m}.
\]
Next, by Theorem~\ref{thm:L_formula},
\begin{align*}
((F\otimes I) U_{\cF_{\al,m}}^{\cH_m}
K_{\imunit y}^{\cF_{\al,m}})(\xi,v)
&=
2^{-\frac{n}{2}}
\enumber^{\frac{\al}{2}|y|^2}
(F K_{0,\sqrt{\al}y}^{\cH_m}(\cdot,v))(\xi)
\\[1ex]
&=
2^{-\frac{n}{2}}
\enumber^{\frac{\al}{2}|y|^2}
L_{\xi,\sqrt{\al}y}
=
2^{-\frac{n}{2}}
\enumber^{\frac{\al}{2}|y|^2}
\sum_{k\in J_{n,m}}
q_{k,\xi}(\sqrt{\al}y)
q_{k,\xi}(v).
\end{align*}
Finally, applying $N$ we get
\[
(R_{\cF_{\al,m}}
K_{\imunit y}^{\cF_{\al,m}})(\xi)
=
2^{-\frac{n}{2}}
\enumber^{\frac{\al}{2}|y|^2}
\Bigl[
q_{\phi(j),\xi}(\sqrt{\al}\,y)
\Bigr]_{j=1}^d.
\]
The squared norm of the obtained vector-function is
\begin{align*}
\left\|
R_{\cF_{\al,m}} K_{\imunit y}^{\cF_{\al,m}}\right\|_{L^2(\bR^n,\tmu_n)^d}^2
&=
2^{-n}
\enumber^{\al|y|^2}
\sum_{j=1}^d
\int_{\bR^n} |q_{\phi(j),\xi}(\sqrt{\al}y)|^2\,\dif\tmu_n(\xi)
\\
&=
\enumber^{\al|y|^2}
\sum_{j=1}^d
\prod_{r=1}^n
\int_\bR
\left|\psi_{\phi(j)_r}\left(\frac{\xi_r
+2\sqrt{\al}y_r}{\sqrt{2}}\right)\right|^2\,
\frac{\dif{}\xi_r}{\sqrt{2}}
=d\,\enumber^{\al|y|^2},
\end{align*}
which coincides with
$\|K_{\imunit y}^{\cF_{\al,m}}\|_{\cF_{\al,m}}^2$,
according to~\eqref{eq:K_Fm_al_norm}.
\end{example}

\begin{example}
Let $y\in\bR^n$
and $j_0\in\{1,\ldots,d\}$.
Consider
$\be\eqdef \phi(j_0)+\bone_n$,
and
\[
f(w)
\eqdef K^{\cF^\al_{(\be)}}_{\imunit y}(w)
= \prod_{r=1}^n \enumber^{\al
\langle w,\imunit y\rangle}
L_{\phi(j_0)_r}(\al|w_r-\imunit y_r|^2),
\]
see~\eqref{eq:K_true_polyFock_multiindex}.
Similarly to Example~\ref{example:R_K},
\[
((F\otimes I) U_{\cF_{\al,m}}^{\cH_m}
K_{\imunit y}^{\cF^\al_{(\be)}})(\xi,v)
=
2^{-\frac{n}{2}}
\enumber^{\frac{\al}{2}|y|^2}
q_{\phi(j_0),\xi}(\sqrt{\al}y)
q_{\phi(j_0),\xi}(v).
\]
Therefore,
\[
(R_{\cF_{\al,m}}
K_{\imunit y}^{\cF^\al_{(\be)}})(\xi)
=
2^{-\frac{n}{2}}
\enumber^{\frac{1}{2}|y|^2}
q_{\phi(j_0),\xi}(\sqrt{\al}\,y)\,
\left[
\de_{j,j_0}\,
\right]_{j=1}^d.
\]
In other words, only one of the components of the vector-function takes non-trivial values, and the others are zero.
\end{example}

\begin{example}
Let $h\in L^1(\bR^n,\tmu_n)$ and fix $y$ in $\bR^n$.
Consider $f\colon\bC^n\to \bC$ defined by
\[
f(w) 
= \int_{\bR^n} h(x)\enumber^{-\frac{1}{2}|x|^2+\imunit\langle x,y\rangle} K^{\cF_{\al,m}}_{\frac{1}{\sqrt{\al}}(x+\imunit y)}(w)\,\dif\tmu_n(x).
\]
This integral can be understood as a Bochner integral of an integrable function with values in $\cF_{\al,m}$.
Therefore, $f$ belongs $\cF_{\al,m}$. Furthermore, we apply $U_{\cF_{\al,m}}^{\cH_m}$ and we pass it under the integral sign. Using~\eqref{eq:UFH_K}, we get 
\begin{align*}
(U_{\cF_{\al,m}}^{\cH_m} f)(u,v)
&= \int_{\bR^n} h(x)\enumber^{-\frac{1}{2}|x|^2+\imunit\langle x,y\rangle} \left(U_{\cF_{\al,m}}^{\cH_m}K^{\cF_{\al,m}}_{\frac{1}{\sqrt{\al}}(x+\imunit y)}\right)(w)\,\dif\tmu_n(x)\\
&= 2^{-\frac{n}{2}}\enumber^{\frac{1}{2}|y|^2}\int_{\bR^n} 
h(x)
K^{\cH_m}_{0,y}(u-x,v)\,\dif \tmu_n(x).
\end{align*}
Hence, the image of $f$ in $\cH_m$ is the following convolution:
\[
(U_{\cF_{\al,m}}^{\cH_m} f)(u,v)
= 2^{-\frac{n}{2}}\enumber^{\frac{1}{2}|y|^2}
\left(h(\,\cdot\,)
\ast\, K_{0,y}^{\cH_m}(\,\cdot\,,v)\right)(u).
\]
Applying the convolution theorem, we get the image of $f$ in $\widehat{\cH}_m$:
\[
((F\otimes I) U_{\cF_{\al,m}}^{\cH_m}  f)(\xi,v)
= 2^{-\frac{n}{2}} \enumber^{\frac{1}{2}|y|^2} 
\widehat{h}(\xi) L_{\xi,y}(v)
= 2^{-\frac{n}{2}}\enumber^{\frac{1}{2}|y|^2} 
\widehat{h}(\xi) \sum_{j=1}^d q_{\phi(j),\xi}(y)q_{\phi(j),\xi}(v).
\]
Finally, $f$ is converted into the following vector-function in $L^2(\bR^n,\tmu_n)^d$ by applying the operator $N$:
\[
(R_{\cF_{\al,m}}f)(\xi)
= 2^{-\frac{n}{2}}\enumber^{\frac{1}{2}|y|^2} 
\widehat{h}(\xi)\left[q_{\phi(j),\xi}(y)\right]_{j=1}^d.
\]
\end{example}

\section{Von Neumann algebra of translation-invariant operators}
\label{sec:algebra_of_translation_invariant_operators}

In this section, we obtain a description
of the von Neumann algebras $\cC(\rho_{\cH_m})$ 
and $\cC(\rho_{\cF_{\al,m}})$.

\begin{prop}
\label{prop:U_and_centralizers}
$\cC(\rho_{\cF_{\al,m}})$ and
$\cC(\rho_{\cH_m})$ are spatially isomorphic.
More precisely,
\[
\cC(\rho_{\cH_m})
=\bigl\{
S\in\cB(\cH_m)\colon\quad
U_{\cH_m}^{\cF_{\al,m}}
S
U_{\cF_{\al,m}}^{\cH_m}
\in\cC(\rho_{\cF_{\al,m}})
\bigr\}.
\]
\end{prop}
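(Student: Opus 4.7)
My plan is to derive the statement directly from the intertwining identity
\[
U_{\cF_{\al,m}}^{\cH_m}\rho_{\cF_{\al,m}}(a)
= \rho_{\cH_m}(\sqrt{\al}\,a)\,U_{\cF_{\al,m}}^{\cH_m}
\qquad(a\in\bR^n)
\]
established in Proposition~\ref{prop:U_rho_F_eq_rho_H_U}, together with the elementary observation that, since $\al>0$, the dilation $a\mapsto \sqrt{\al}\,a$ is a bijection of $\bR^n$ onto itself. The result then reduces to routine algebraic manipulation using that $U_{\cF_{\al,m}}^{\cH_m}$ is a surjective isometry with inverse $U_{\cH_m}^{\cF_{\al,m}}$.

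First, I would rewrite the intertwining relation in the conjugation form
\[
\rho_{\cH_m}(\sqrt{\al}\,a)
= U_{\cF_{\al,m}}^{\cH_m}\,\rho_{\cF_{\al,m}}(a)\,U_{\cH_m}^{\cF_{\al,m}}
\qquad(a\in\bR^n),
\]
and note that
\[
\{\rho_{\cH_m}(b)\colon b\in\bR^n\}
= \{\rho_{\cH_m}(\sqrt{\al}\,a)\colon a\in\bR^n\},
\]
so that commuting with \emph{every} $\rho_{\cH_m}(b)$ is the same as commuting with every $\rho_{\cH_m}(\sqrt{\al}\,a)$. This bijectivity step is the only content beyond formal manipulation and is the small point that needs to be made explicit.

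Next, given $S\in\cB(\cH_m)$, I set $T\eqdef U_{\cH_m}^{\cF_{\al,m}}\,S\,U_{\cF_{\al,m}}^{\cH_m}\in\cB(\cF_{\al,m})$, which is a bijective correspondence between $\cB(\cH_m)$ and $\cB(\cF_{\al,m})$. The chain of equivalences
\[
S\,\rho_{\cH_m}(b) = \rho_{\cH_m}(b)\,S \ (\forall b)
\ \Longleftrightarrow\
S\,U_{\cF_{\al,m}}^{\cH_m}\rho_{\cF_{\al,m}}(a)U_{\cH_m}^{\cF_{\al,m}}
= U_{\cF_{\al,m}}^{\cH_m}\rho_{\cF_{\al,m}}(a)U_{\cH_m}^{\cF_{\al,m}}\,S \ (\forall a)
\]
\[
\Longleftrightarrow\
T\,\rho_{\cF_{\al,m}}(a) = \rho_{\cF_{\al,m}}(a)\,T \ (\forall a\in\bR^n)
\]
follows by the previous paragraph and by conjugating the middle identity with $U_{\cH_m}^{\cF_{\al,m}}$ on the left and $U_{\cF_{\al,m}}^{\cH_m}$ on the right. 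This is exactly the asserted description of $\cC(\rho_{\cH_m})$ in terms of $\cC(\rho_{\cF_{\al,m}})$, and it simultaneously shows that the map $S\mapsto T$ is a spatial isomorphism of the two von Neumann algebras.

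I do not expect any genuine obstacle; the proof is essentially bookkeeping. The one place where a careless reader might slip is the implicit use of the bijectivity of $a\mapsto\sqrt{\al}\,a$, which I would state separately so that the equivalence between commuting with the family indexed by $a$ and the family indexed by $b=\sqrt{\al}\,a$ is transparent.
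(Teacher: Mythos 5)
Your argument is correct and follows exactly the route the paper takes: the paper's proof is the single line ``Follows from Proposition~\ref{prop:U_rho_F_eq_rho_H_U},'' and your write-up simply makes explicit the conjugation by the unitary $U_{\cF_{\al,m}}^{\cH_m}$ and the bijectivity of $a\mapsto\sqrt{\al}\,a$ that this one-liner relies on. No gaps; your version is just a more detailed rendering of the same proof.
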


\begin{proof}
Follows from Proposition~\ref{prop:U_rho_F_eq_rho_H_U}.
\end{proof}

We identify $L^\infty(\bR^n)^{d\times d}$ with the W*-algebra $L^\infty(\bR^n,\cM_d)$ of $d\times d$ matrix-functions defined on $\bR^n$ and measurable with respect to $\mu_n$ (or $\tmu_n$).
Given $\si$ in $L^\infty(\bR^n)^{d\times d}$,
we denote by $M_\si$ the multiplication operator acting in $L^2(\bR^n,\tmu_n)^d$ by
\begin{equation}
\label{eq:mul_operator_def}
(M_\si f)(\xi) \eqdef \si(\xi) f(\xi)
\qquad(\xi\in\bR^n).
\end{equation}
In the right-hand side of~\eqref{eq:mul_operator_def},
$\si(\xi)\in\cM_d=\bC^{d\times d}$ and $f(\xi)\in\bC^d$
for every $\xi$ in $\bR^n$.

The W*-algebra 
$L^\infty(\bR^n)^{d\times d}$ 
is isometrically isomorphic to the
von Neumann algebra
$\bigl\{M_\si\colon\ \si\in L^\infty(\bR^n)^{d\times d}\bigr\}$.

We denote by $I_d$ the identity matrix of order $d$.
The set $L^\infty(\bR^n)\otimes I_d$ is a commutative subalgebra of 
$L^\infty(\bR^n)^{d\times d}$.
Given $h$ in $L^\infty(\bR^n)$,
we denote by $h I_d$ the scalar matrix-function
\[
(h I_d)(\xi) \eqdef h(\xi) I_d\qquad(\xi\in\bR^n),
\]
which corresponds to the scalar multiplication operator:
\[
(M_{h I_d}f)(\xi)
= h(\xi) f(\xi)
\qquad(\xi\in\bR^n).
\]
We define
$\La_{\cH_m}\colon
L^\infty(\bR^n)^{d\times d}
\to
\cC(\rho_{\cH_m})$
and
$\La_{\cF_{\al,m}}\colon
L^\infty(\bR^n)^{d\times d}
\to
\cC(\rho_{\cF_{\al,m}})$ by
\[
\La_{\cH_m}(\si)
=
R_{\cH_m}^\ast M_\si R_{\cH_m},
\qquad
\La_{\cF_{\al,m}}(\si)
=
R_{\cF_{\al,m}}^\ast M_\si R_{\cF_{\al,m}}.
\]

\begin{thm}
\label{thm:Neumann_algebras}
$\La_{\cH_m}$
and $\La_{\cF_{\al,m}}$
are well-defined isometric isomorphisms.
Hence,
$\cC(\rho_{\cF_{\al,m}})$
and $\cC(\rho_{\cH_m})$
are isometrically isomorphic
to $L^\infty(\bR^n)^{d\times d}$.
\end{thm}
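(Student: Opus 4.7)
The plan is to reduce everything to the abstract scheme of \cite{HerreraMaximenkoRamos2022} applied to $\cH_m$, and then transfer the result to $\cF_{\al,m}$ via $U_{\cF_{\al,m}}^{\cH_m}$ and Proposition~\ref{prop:U_and_centralizers}. The first step is to establish the intertwining property of $R_{\cH_m}$. Since the Fourier--Plancherel transform $F$ converts translation by $a\in\bR^n$ into modulation by $\chi_a(\xi)\eqdef\enumber^{-\imunit\langle a,\xi\rangle}$, and since the operator $N$ acts ``fiberwise'' in $\xi$ (the functions $q_{\phi(j),\xi}(\cdot)$ depend on $v$ alone for each fixed $\xi$), one obtains
\[
R_{\cH_m}\,\rho_{\cH_m}(a)
=
M_{\chi_a I_d}\,R_{\cH_m}
\qquad (a\in\bR^n).
\]
I would verify this by applying $R_{\cH_m}$ to a function of the form $K^{\cH_m}_{x,y}$, using Proposition~\ref{prop:K_Hm} and~\eqref{eq:L_new_form}; the translation-invariance of the kernel proved in Proposition~\ref{prop:U_rho_F_eq_rho_H_U} is exactly what is needed.

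Next, since $M_{\chi_a I_d}$ commutes with $M_\si$ for every $\si\in L^\infty(\bR^n)^{d\times d}$, the conjugate $\La_{\cH_m}(\si)=R_{\cH_m}^{\ast}M_\si R_{\cH_m}$ commutes with every $\rho_{\cH_m}(a)$, so $\La_{\cH_m}$ takes values in $\cC(\rho_{\cH_m})$. The map $\si\mapsto M_\si$ is a well-known isometric $*$-isomorphism from $L^\infty(\bR^n)^{d\times d}$ onto the von Neumann algebra of matrix-valued multiplication operators in $\cB(L^2(\bR^n,\tmu_n)^d)$, and conjugation by the unitary $R_{\cH_m}$ preserves the operator norm and the $*$-algebra structure. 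Hence $\La_{\cH_m}$ is an isometric $*$-homomorphism into $\cC(\rho_{\cH_m})$.

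For surjectivity, take $T\in\cC(\rho_{\cH_m})$ and set $\widetilde{T}\eqdef R_{\cH_m}TR_{\cH_m}^{\ast}\in\cB(L^2(\bR^n,\tmu_n)^d)$. By the intertwining relation above, $\widetilde{T}$ commutes with $M_{\chi_a I_d}$ for all $a\in\bR^n$. The family $\{\chi_a\colon a\in\bR^n\}$ generates $L^\infty(\bR^n)$ in the weak-$\ast$ topology, so $\widetilde{T}$ commutes with $M_{hI_d}$ for every $h\in L^\infty(\bR^n)$. The commutant of the algebra of scalar matrix-valued multiplications inside $\cB(L^2(\bR^n,\tmu_n)\otimes\bC^d)$ is precisely the algebra of matrix-valued multiplications $\{M_\si\colon\si\in L^\infty(\bR^n)^{d\times d}\}$, which is the content of the scheme in~\cite{HerreraMaximenkoRamos2022}. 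Thus $\widetilde{T}=M_\si$ for some $\si$, and $T=\La_{\cH_m}(\si)$.

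The main obstacle, as I see it, is the explicit verification of the intertwining relation $R_{\cH_m}\rho_{\cH_m}(a)=M_{\chi_a I_d}R_{\cH_m}$, since it requires carefully commuting $\rho_{\cH_m}(a)$ past both factors $F\otimes I$ and $N$ in the definition $R_{\cH_m}=N(F\otimes I)$; once this is established, everything else is standard von Neumann algebra machinery. Finally, the assertion for $\La_{\cF_{\al,m}}$ follows at once from the identity $R_{\cF_{\al,m}}=R_{\cH_m}U_{\cF_{\al,m}}^{\cH_m}$ combined with Proposition~\ref{prop:U_and_centralizers}: the diagram of spatial isomorphisms identifies $\cC(\rho_{\cF_{\al,m}})$ with $\cC(\rho_{\cH_m})$, and both are isometrically isomorphic to $L^\infty(\bR^n)^{d\times d}$ via $\La_{\cF_{\al,m}}$ and $\La_{\cH_m}$ respectively.
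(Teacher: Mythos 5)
Your proposal is correct and follows essentially the same route as the paper: the paper's proof simply invokes the general scheme of \cite{HerreraMaximenkoRamos2022} together with Theorem~\ref{thm:L_formula} for the $\La_{\cH_m}$ part and Proposition~\ref{prop:U_and_centralizers} for the transfer to $\cF_{\al,m}$, and what you have done is unpack that black box — the intertwining relation $R_{\cH_m}\rho_{\cH_m}(a)=M_{\chi_a I_d}R_{\cH_m}$ (which the paper records in Example~\ref{example:diagonalization_of_Weyl_translations}) and the identification of the commutant of $L^\infty(\bR^n)\otimes I_d$ with $L^\infty(\bR^n)^{d\times d}$. The only cosmetic slip is the pointer to Proposition~\ref{prop:U_rho_F_eq_rho_H_U} for the translation-invariance of the kernel; the relevant identity $K^{\cH_m}_{x,y}(u,v)=K^{\cH_m}_{0,y}(u-x,v)$ is noted in the proof that $(\rho_{\cH_m},\cH_m)$ is a unitary representation.
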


\begin{proof}
The part about $\La_{\cH_m}$
follows from~\cite{HerreraMaximenkoRamos2022}
and Theorem~\ref{thm:L_formula}.
The part about $\La_{\cF_{\al,m}}$ is a corollary which takes into account
Proposition~\ref{prop:U_and_centralizers}.
\end{proof}

The isometric isomorphisms $\La_{\cH_m}^{-1}$ and $\La_{\cF_{\al,m}}^{-1}$ can be called ``spectral symbols'' of the corresponding von Neumann algebras.

In particular, Theorem~\ref{thm:Neumann_algebras} implies that for $m\ge 2$, algebras $\cC(\rho_{\cF_{\al,m}})$ and $\cC(\rho_{\cH_m})$ are not commutative.

In the rest of this section,
we compute $\La_{\cF_{\al,m}}^{-1}(S)$
for some examples:
horizontal Weyl translations,
vertical Toeplitz operators,
and horizontal convolutions.

Given $b$ in $\bR^n$, we define
$E_b\colon\bR^n\to\bC$ by
\[
E_b(\xi)
\eqdef
\enumber^{\imunit \langle \xi,b\rangle}\qquad(\xi\in\bR^n).
\]
In other words, $E_b$ is the character of the group $\bR^n$ naturally associated to $b$,
and $b\mapsto E_b$ is an isomorphism between $\bR^n$ and its dual group.

\begin{example}
\label{example:diagonalization_of_Weyl_translations}
Let $a$ in $\bR^n$. By Proposition~\ref{prop:U_rho_F_eq_rho_H_U}, the operator $U_{\cF_{\al,m}}^{\cH_m}$
intertwines $\rho_{\cF_{\al,m}}(a/\sqrt{\al})$ and $\rho_{\cH_m}(a)$.
Moreover, it is easy to see that $F\otimes I$ intertwines the translation operator $\rho_{\cH_m}(a)$ in $\cH_m$ and the modulation operator in $\widehat{\cH}_m$:
\[
(F\otimes I)\rho_{\cH_m}(a)
=\enumber^{-\imunit\langle \xi,a\rangle}(F\otimes I).
\]
Then, every translation operator $\rho_{\cH_m}(a)$ is transformed into a multiplication by a character in $L^2(\bR^n)^d$, i.e., 
\[
\left(R_{\cH_m} \rho_{\cH_m}(a) R_{\cH_m}^\ast f\right)(\xi,y)
= (M_{E_a I_d} f)(\xi,y)
= \enumber^{-\imunit\langle \xi,a\rangle} f(\xi)
\qquad(\xi\in\bR^n).
\]
Shortly, this means that
\[
\La_{\cF_{\al,m}}(E_{-a} I_d)
=
\rho_{\cF_{\al,m}}(a/\sqrt{\al}),
\qquad
\La_{\cH_m}
(E_{-a} I_d)
=
\rho_{\cH_m}(a).
\]
The main actors of this example are shown on Figure~\ref{fig:diagonalization_of_Weyl_translations}.
\end{example}

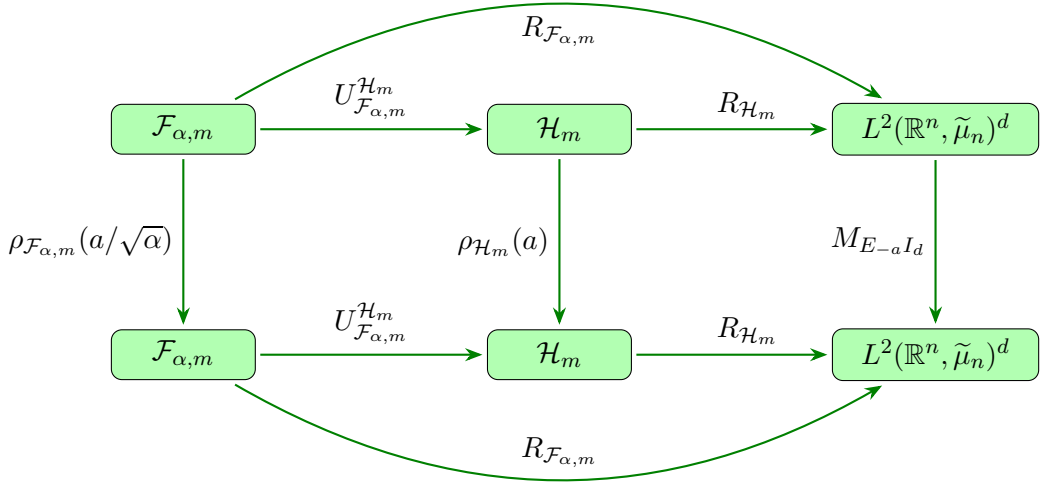
\begin{figure}[htb]
\centering
\begin{tikzpicture}
\node[spacenode] (Fm1)
  at (-5, 0) {$\cF_{\al,m}$};
\node[spacenode] (Hm1)
  at (0, 0) {$\cH_m$};
\draw [myedge] (Fm1) edge(Hm1);
\node at (-2.5, 0) [above]
  {$U_{\cF_{\al,m}}^{\cH_m}$};
\node[spacenode] (Fm2)
  at (-5, -3) {$\cF_{\al,m}$};
\node[spacenode] (Hm2)
  at (0, -3) {$\cH_m$};
\draw [myedge]
  (Fm2) edge (Hm2);
\node at (-2.5, -3) [above]
  {$U_{\cF_{\al,m}}^{\cH_m}$};
\draw [myedge]
  (Fm1) edge (Fm2);
\node at (-5, -1.5) [left]  {$\rho_{\cF_{\al,m}}(a/\sqrt{\al})$};
\draw [myedge]
  (Hm1) edge (Hm2);
\node at (0, -1.5) [left]
{$\rho_{\cH_m}(a)$};
\node[spacenode,text width=15ex] (X1)
  at (5, 0) {$L^2(\bR^n,\tmu_n)^d$};
\node[spacenode,text width=15ex] (X2)
  at (5, -3) {$L^2(\bR^n,\tmu_n)^d$};
\draw [myedge]
  (Hm1) edge (X1);
\node at (2.5, 0) [above]
  {$R_{\cH_m}$};
\draw [myedge]
  (Hm2) edge (X2);
\node at (2.5, -3) [above]
  {$R_{\cH_m}$};
\draw [myedge, bend left]
  (Fm1) edge (X1);
\node at (0, 1.3)
  {$R_{\cF_{\al,m}}$};
\draw [myedge, bend right]
  (Fm2) edge (X2);
\node at (0, -4.3)
  {$R_{\cF_{\al,m}}$};
\draw [myedge]
  (X1) edge (X2);
\node at (5, -1.5) [left]
  {$M_{E_{-a} I_d}$};
\end{tikzpicture}
\caption{Main objects of Example~\ref{example:diagonalization_of_Weyl_translations}.
\label{fig:diagonalization_of_Weyl_translations}}
\end{figure}

Given $g$ in $L^\infty(\bR^n)$,
we define $\widetilde{g}\in L^\infty(\bR^n\times\bR^n)$ by
\[
\widetilde{g}(x,y)\eqdef g(y).
\]
We denote by 
$T_{\widetilde{g}}^{\cF_{\al,m}}$
and
$T_{\widetilde{g}}^{\cH_m}$
the corresponding Toeplitz operators acting in
$\cF_{\al,m}$ and $\cH_m$, respectively.

\begin{prop}
\label{prop:gammas_of_vertical_Toeplitz_operators}
Let $g\in L^\infty(\bR^n)$.
Then
$T_{\widetilde{g}}^{\cH_m}\in\cC(\rho_{\cH_m})$,
$T_{\widetilde{g}}^{\cF_{\al,m}}\in\cC(\rho_{\cF_{\al,m}})$,
and
\[
\La_{\cF_{\al,m}}^{-1}
(T_{\widetilde{g}}^{\cF_{\al,m}})
=
\La_{\cH_m}^{-1}
(T_{\widetilde{g}}^{\cH_m})
=\ga_g,
\]
where $\ga_g:\bR^n\to\cM_d$ is the matrix-function given by
\begin{equation}\label{eq:gamma_Toeplitz}
\ga_g(\xi)_{r,s}
\eqdef
2^{n/2}\,\int_{\bR^n}
g(v)\,
\left(
\prod_{p=1}^n \psi_{\phi(r)_p}\left(\frac{\xi_p+2v_p}{\sqrt{2}}\right)
\right)
\left(
\prod_{q=1}^n \psi_{\phi(s)_q}\left(\frac{\xi_q+2v_q}{\sqrt{2}}\right)
\right)
\,
\dif\mu_n(v).
\end{equation}
\end{prop}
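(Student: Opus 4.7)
The plan has two stages: first establish that both Toeplitz operators belong to the respective centralizers, then compute the matrix entries of the spectral symbol $\gamma_g$ by transporting $T_{\widetilde g}^{\cH_m}$ through the isomorphism $R_{\cH_m}$.

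For the invariance, the key observation is that
\[
\widetilde g(x-a,y) = g(y) = \widetilde g(x,y)\qquad(a\in\bR^n),
\]
so the multiplication operator $M_{\widetilde g}$ on $L^2(\bR^{2n},\tmu_{2n})$ commutes with the translation $(x,y)\mapsto(x-a,y)$. Since $\cH_m$ is invariant under horizontal translations (as used in the proof that $\rho_{\cH_m}$ is a representation), the orthogonal projection $P_{\cH_m}$ commutes with them as well, whence $T_{\widetilde g}^{\cH_m}=P_{\cH_m} M_{\widetilde g} P_{\cH_m}$ commutes with each $\rho_{\cH_m}(a)$. The analogous statement for $T_{\widetilde g}^{\cF_{\al,m}}$ follows from Proposition~\ref{prop:U_and_centralizers} once one checks that $U_{\cF_{\al,m}}^{\cH_m}$ intertwines the two Toeplitz operators, which amounts to verifying that multiplication by $\widetilde g$ transports correctly under the change of variable $(x,y)=(\sqrt{\al}\Re z,\sqrt{\al}\Im z)$.

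For the explicit formula, the idea is to conjugate $T_{\widetilde g}^{\cH_m}$ by $F\otimes I$. Because $\widetilde g$ depends only on the ``vertical'' variable, $M_{\widetilde g}$ commutes with the horizontal Fourier transform, so
\[
(F\otimes I)\,T_{\widetilde g}^{\cH_m}\,(F\otimes I)^\ast
= \widehat P\,M_{\widetilde g}\,\widehat P,
\]
where $\widehat P$ is the projection onto $\widehat{\cH}_m$. Using the fiber decomposition $\widehat{\cH}_m=\int^{\oplus}_{\bR^n}\widehat{\cH}_{m,\xi}\,\dif\tmu_n(\xi)$ from Section~\ref{sec:decomposition_of_spaces} together with the orthonormal basis $(q_{\phi(j),\xi})_{j=1}^d$ of each $\widehat{\cH}_{m,\xi}$ provided by Proposition~\ref{prop:q_orthonormal_in_L2}, the compression of $M_{\widetilde g}$ to the fiber over $\xi$ becomes the $d\times d$ matrix with entries
\[
\gamma_g(\xi)_{r,s}
= \langle g\, q_{\phi(s),\xi},\,q_{\phi(r),\xi}\rangle_{L^2(\bR^n,\tmu_n)}.
\]
Substituting~\eqref{eq:q_formula}, using that Hermite functions are real-valued, and combining the constants $2^n\pi^{n/2}/(2\pi)^{n/2}=2^{n/2}$ reproduces~\eqref{eq:gamma_Toeplitz}. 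The identity $\La_{\cF_{\al,m}}^{-1}(T_{\widetilde g}^{\cF_{\al,m}})=\gamma_g$ then follows from the factorization $R_{\cF_{\al,m}}=R_{\cH_m} U_{\cF_{\al,m}}^{\cH_m}$ and the intertwining verified in the first stage.

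The main obstacle is the bookkeeping rather than a conceptual difficulty: one must verify carefully that the naive fiber-wise formula really describes the compression of $M_{\widetilde g}$, which requires the measurability of $\xi\mapsto\gamma_g(\xi)$ and the correct handling of the direct-integral structure, and one must track the $\sqrt{\al}$ scaling when transferring the symbol between $\cF_{\al,m}$ and $\cH_m$. Boundedness $\gamma_g\in L^\infty(\bR^n)^{d\times d}$ is immediate from $\|g\|_\infty$ together with the orthonormality of $(q_{\phi(j),\xi})_j$, so no analytic estimate beyond the explicit integral is needed.
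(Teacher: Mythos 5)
Your argument is correct and follows essentially the same route as the paper: the paper's proof simply invokes \cite[Proposition~8.7]{HerreraMaximenkoRamos2022} to obtain $\ga_g(\xi)_{r,s}=\int_{\bR^n} g(v)\,\conj{q_{\phi(r),\xi}(v)}\,q_{\phi(s),\xi}(v)\,\dif\tmu_n(v)$ and then substitutes \eqref{eq:q_formula}, while you re-derive that fiber-wise compression formula from scratch (commutation of $M_{\widetilde g}$ with the horizontal translations and with $F\otimes I$, then compression to the fibers $\widehat{\cH}_{m,\xi}$ in the orthonormal bases $(q_{\phi(j),\xi})_{j=1}^d$), which is precisely the content of the cited result. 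Your constant $2^n\pi^{n/2}/(2\pi)^{n/2}=2^{n/2}$ and the index bookkeeping agree with \eqref{eq:gamma_Toeplitz}.
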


\begin{proof}
Indeed, according to~\cite[Proposition 8.7]{HerreraMaximenkoRamos2022},
\[
\ga_g(\xi)_{r,s}
= \int_{\bR^n} g(v) \,\conj{q_{\phi(r),\xi}(v)}\,q_{\phi(s),\xi}(v)\,\dif\tmu_n(v).
\]
Substituting~\eqref{eq:q_formula}, we get~\eqref{eq:gamma_Toeplitz}.
\end{proof}

Figure~\ref{fig:vertical_Toeplitz} illustrates the main actors of Proposition~\ref{prop:gammas_of_vertical_Toeplitz_operators}.

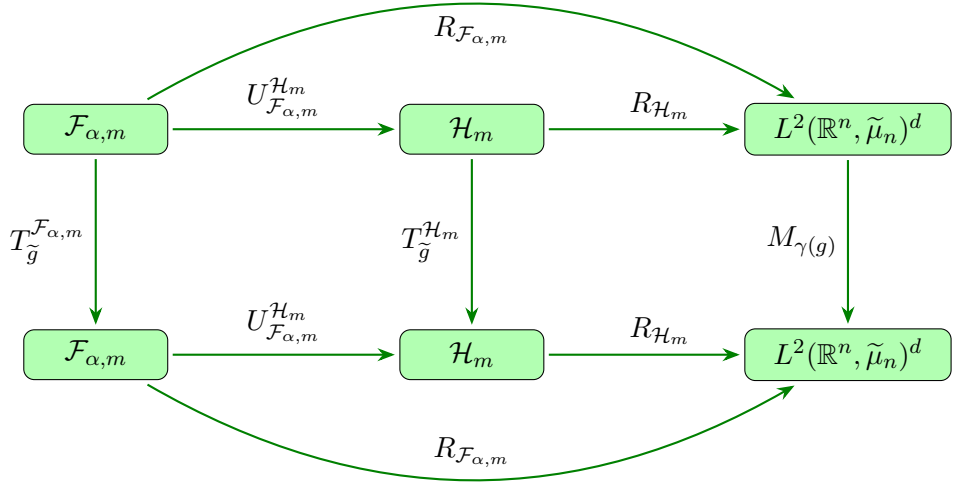
\begin{figure}[htb]
\centering
\begin{tikzpicture}
\node[spacenode] (Fm1)
  at (-5, 0) {$\cF_{\al,m}$};
\node[spacenode] (Hm1)
  at (0, 0) {$\cH_m$};
\draw [myedge] (Fm1) edge(Hm1);
\node at (-2.5, 0) [above]
  {$U_{\cF_{\al,m}}^{\cH_m}$};
\node[spacenode] (Fm2)
  at (-5, -3) {$\cF_{\al,m}$};
\node[spacenode] (Hm2)
  at (0, -3) {$\cH_m$};
\draw [myedge]
  (Fm2) edge (Hm2);
\node at (-2.5, -3) [above]
  {$U_{\cF_{\al,m}}^{\cH_m}$};
\draw [myedge]
  (Fm1) edge (Fm2);
\node at (-5, -1.5) [left]
   {$T_{\widetilde{g}}^{\cF_{\al,m}}$};
\draw [myedge]
  (Hm1) edge (Hm2);
\node at (0, -1.5) [left]
{$T_{\widetilde{g}}^{\cH_m}$};
\node[spacenode,text width=15ex] (X1)
  at (5, 0) {$L^2(\bR^n,\tmu_n)^d$};
\node[spacenode,text width=15ex] (X2)
  at (5, -3) {$L^2(\bR^n,\tmu_n)^d$};
\draw [myedge]
  (Hm1) edge (X1);
\node at (2.5, 0) [above]
  {$R_{\cH_m}$};
\draw [myedge]
  (Hm2) edge (X2);
\node at (2.5, -3) [above]
  {$R_{\cH_m}$};
\draw [myedge]
  (X1) edge (X2);
\node at (5, -1.5) [left]
  {$M_{\ga(g)}$};
\draw [myedge, bend left]
  (Fm1) edge (X1);
\node at (0, 1.3)
  {$R_{\cF_{\al,m}}$};
\draw [myedge, bend right]
  (Fm2) edge (X2);
\node at (0, -4.3)
  {$R_{\cF_{\al,m}}$};
\end{tikzpicture}
\caption{Main objects of 
Proposition~\ref{prop:gammas_of_vertical_Toeplitz_operators}.
\label{fig:vertical_Toeplitz}}
\end{figure}

\begin{rem}
Proposition~\ref{prop:gammas_of_vertical_Toeplitz_operators} is similar to~\cite[Theorem 9]{ArroyoSanchezHernandezLopez2021}.
Indeed, by taking $\eta = -\sqrt{2}\xi$ and $t = \sqrt{2}y-\eta$, 
i.e., $y = \frac{t+\eta}{\sqrt{2}}$,
we obtain the matrix-function $\si_g$
from~\cite[Theorem 9]{ArroyoSanchezHernandezLopez2021}:
\[
\si_g(\eta)_{r,s}
\eqdef
\ga_g\left(-\frac{\eta}{\sqrt{2}}\right)_{r,s}
=
\int_{\bR^n}
g\left(\frac{\eta+t}{\sqrt{2}}\right)\,
\left(
\prod_{p=1}^n \psi_{\phi(r)_p}(t_p)
\right)
\left(
\prod_{q=1}^n \psi_{\phi(s)_q}(t_q)
\right)
\,
\dif\mu_n(t).
\]
For $n=1$, this simplifies to~\cite[Theorem 4]{SanchezGonzalezLopezArroyo2018}.
\end{rem}

\begin{example}
\label{example:horizontal_convolution}
Let $h$ in $L^1(\bR^n,\tmu_n)$. Define $S_h\colon\cF_{\al,m}\to\cF_{\al,m}$ as
\[
(S_hf)(w)
\eqdef
\int_{\bR^n} \bigl(\rho_{\cF_{\al,m}}(x/\sqrt{\al})f\bigr)(w) h(x)\,\dif\tmu_n(x),
\]
i.e.,
\[
(S_hf)(w)
= \int_{\bR^n} \enumber^{\sqrt{\al}\langle w,x\rangle -\frac{1}{2}|x|^2} f\left(w-\frac{x}{\sqrt{\al}}\right) h(x)\,\dif\tmu_n(x).
\]
Notice that $S_h$ can be written as the Bochner integral of an absolutely integrable family of operators belonging to $\cC(\rho_{\cF_{\al,m}})$:
\[
S_h
\eqdef
\int_{\bR^n}
\rho_{\cF_{\al,m}}\left(\frac{x}{\sqrt{\al}}\right) h(x)\,\dif\tmu_n(x).
\]
Therefore, $S_h\in\cC(\rho_{\cF_{\al,m}})$.
Let us now compute
$Q_h\eqdef
U_{\cF_{\al,m}}^{\cH_m}
S_h U_{\cH_m}^{\cF_{\al,m}}$ and $R_{\cF_{\al,m}}S_h R_{\cF_{\al,m}}^\ast$.
\begin{align*}
(Q_h g)(u,v)
&=
(U_{\cF_{\al,m}}^{\cH_m}S_h U_{\cH_m}^{\cF_{\al,m}}g)(u,v)
= 2^{\frac{n}{2}}\enumber^{-\frac{1}{2}|u|^2-\frac{1}{2}|v|^2-\imunit\langle u,v\rangle} (S_h U_{\cH_m}^{\cF_{\al,m}}g)\left(\frac{u+\imunit v}{\sqrt{\al}}\right)
\\
&= 2^{\frac{n}{2}}\enumber^{-\frac{1}{2}|u|^2-\frac{1}{2}|v|^2-\imunit\langle u,v\rangle} 
\int_{\bR^n}\enumber^{\langle u+\imunit v,x\rangle -\frac{1}{2}|x|^2} (U_{\cH_m}^{\cF_{\al,m}}g)\left(\frac{u-x+\imunit v}{\sqrt{\al}}\right)h(x)
\,\dif\tmu_n(x)
\\
&= \enumber^{-\frac{1}{2}|u|^2-\frac{1}{2}|v|^2-\imunit\langle u,v\rangle} 
\int_{\bR^n}\enumber^{\langle u+\imunit v,x\rangle -\frac{1}{2}|x|^2} 
\enumber^{\frac{1}{2}|u-x|^2+\frac{1}{2}|v|^2+\imunit\langle u-x,v\rangle}
g(u-x, v)h(x)\,\dif\tmu_n(x)
\\
&= 
\int_{\bR^n}
g(u-x, v)h(x)\,\dif\tmu_n(x).
\end{align*}
So, $Q_h$ is just the ``horizontal convolution operator''.
By the convolution theorem,
the Fourier transform converts it to the multiplication operator:
\begin{align*}
((F\otimes I)U_{\cF_{\al,m}}^{\cH_m}S_h U_{\cH_m}^{\cF_{\al,m}}(F\otimes I)^\ast g)(\xi,v)
=
((F\otimes I)Q_h(F\otimes I)^\ast g)(\xi,v)
= g(\xi,v)\widehat{h}(\xi).
\end{align*}
Finally,
\[
(R_{\cF_{\al,m}} S_h R_{\cF_{\al,m}}^\ast f)(\xi)
=
\widehat{h}(\xi)
f(\xi)
= (M_{\widehat{h}I_d}f)(\xi).
\]
Thereby we have shown that $\La_{\cF_{\al,m}}^{-1}$ transforms $S_h$ into a scalar matrix-function:
\[
\La_{\cF_{\al,m}}^{-1}(S_h)
= \widehat{h} I_d.
\]
Notice that $\widehat{h}$ is a continuous function with zero limit at infinity.
It follows from~\cite[Theorem 26]{ArroyoSanchezHernandezLopez2021}
that the matrix-function
$\widehat{h} I_d$ belongs to the C*-algebra generated by
\[
\bigl\{\ga(g)\colon\
g\in L^\infty(\bR^n)\bigr\}.
\]
Hence, $S_h$ belongs to the C*-algebra generated by vertical Toeplitz operators.
\end{example}

\begin{center}
\begin{tikzpicture}
\node[spacenode] (Fm1)
  at (-5, 0) {$\cF_{\al,m}$};
\node[spacenode] (Hm1)
  at (0, 0) {$\cH_m$};
\draw [myedge] (Fm1) edge(Hm1);
\node at (-2.5, 0) [above]
  {$U_{\cF_{\al,m}}^{\cH_m}$};
\node[spacenode] (Fm2)
  at (-5, -3) {$\cF_{\al,m}$};
\node[spacenode] (Hm2)
  at (0, -3) {$\cH_m$};
\draw [myedge]
  (Fm2) edge (Hm2);
\node at (-2.5, -3) [above]
  {$U_{\cF_{\al,m}}^{\cH_m}$};
\draw [myedge]
  (Fm1) edge (Fm2);
\node at (-5, -1.5) [left]
   {$S_h$};
\draw [myedge]
  (Hm1) edge (Hm2);
\node at (0, -1.5) [left]
{$Q_h$};
\node[spacenode,text width=15ex] (X1)
  at (5, 0) {$L^2(\bR^n,\tmu_n)^d$};
\node[spacenode,text width=15ex] (X2)
  at (5, -3) {$L^2(\bR^n,\tmu_n)^d$};
\draw [myedge]
  (Hm1) edge (X1);
\node at (2.5, 0) [above]
  {$R_{\cH_m}$};
\draw [myedge]
  (Hm2) edge (X2);
\node at (2.5, -3) [above]
  {$R_{\cH_m}$};
\draw [myedge]
  (X1) edge (X2);
\node at (5, -1.5) [left]
  {$M_{\widehat{h}I_d}$};
\draw [myedge, bend left]
  (Fm1) edge (X1);
\node at (0, 1.3)
  {$R_{\cF_{\al,m}}$};
\draw [myedge, bend right]
  (Fm2) edge (X2);
\node at (0, -4.3)
  {$R_{\cF_{\al,m}}$};
\end{tikzpicture}
\end{center}

\section{Numerical tests}
\label{sec:tests}

We have tested many formulas from this paper in Sagemath~\cite{Sage2023} using numerical or symbolic computations.
Notice that the symbolic integration in Sagemath is based on Maxima.

\begin{test}[decomposition of generalized Laguerre polynomials]
\eqref{eq:Laguerre_decomposition} is verified for all $n\le 8$ and $p\le 8$ with symbolic calculations in the ring of multivariate polynomials.
\end{test}

\begin{test}[reproducing kernel of $\cF_{\al,m}$ via the orthonormal basis]
The reproducing kernel of $\cF_{\al,m}$ can be expressed via the orthonormal polynomial basis:
\begin{equation}
\label{eq:K_Fm_via_basis}
K_z^{\cF_{\al,m}}(w)
=
\sum_{\substack{p,q\in\bNz^n,\\
|q|<m}}
B_{\al,p,q}(w)
\overline{B_{\al,p,q}(z)}
\qquad(w,z\in\bC^n),
\end{equation}
where $B_{\al,p,q}(w)\eqdef\prod_{j=1}^n b_{p_j,q_j}(\sqrt{\al}w_j)$,
and $b_{r,s}$ are the normalized ``complex Hermite polynomials'' given in~\cite[Section~2]{MaximenkoTelleria2020}.
The right-hand sides of~\eqref{eq:K_Fm_al_kernel}
and~\eqref{eq:K_Fm_via_basis} coincide up to $10^{-15}$ for $n,m\le 4$ and random values of $z$ and $w$ with $|z|,|w|<1/2$,
when the series in~\eqref{eq:K_Fm_via_basis} is truncated to the finite sum with $|p|\le 128$.
\end{test}

\begin{test}[reproducing property of the kernel of $\cF_{\al,m}$ for monomial functions]
Using symbolic integration we have tested the reproducing property
\[
\frac{\al^n}{\pi^n}
\int_{\bR^{2n}}
f(u+\imunit v)
\overline{
K_{x+\imunit y}^{\cF_{\al,m}}(u+\imunit v)
}
\enumber^{-\al|u+\imunit v|^2}
\dif{}u_1\cdots\dif{}u_n\,
\dif{}v_1\cdots\dif{}v_n\,
=f(x+\imunit y),
\]
for all $n,m$ with $n\le 3$, $m\le 3$,
and all $f$ of the form
$f(w)=w^p \conjw^q$
with $|q|\le m-1$ and $|p|\le 5$.
\end{test}

\begin{test}[decomposition of the reproducing kernel of $\cF_{\al,m}$ into a sum of products]
Using symbolic calculations in variables 
$\al$,
$x_1,\ldots,x_n$, $y_1,\ldots,y_n$,
$u_1,\ldots,u_n$,
$v_1,\ldots,v_n$,
we have verified that 
the right-hand sides of~\eqref{eq:K_Fm_via_sum_of_products_of_Laguerre_functions} and~\eqref{eq:K_Fm_al_kernel}
coincide for all $n,m$ with $n\le 5$ and $m\le 5$.
\end{test}

\begin{test}[Fourier connection between Laguerre and Hermite functions]
\eqref{eq:Fourier_Laguerre_inverse} and \eqref{eq:Fourier_Laguerre_function_nice} are verified with symbolic integration for all $n\le 10$.
\end{test}

\begin{test}[horizontal Fourier transform of the reproducing kernel]
We have tested
\eqref{eq:L_formula} using symbolic integration, for all $n,m$ in $\bN$ with $n,m\le 4$.
\end{test}

\bigskip\noindent
Erick Lee-Guzm\'{a}n\newline
Universidad Veracruzana\newline
Facultad de Matem\'{a}ticas\newline
Xalapa, Veracruz\newline
Mexico\newline
email: ericklee81@gmail.com\newline
https://orcid.org/0009-0001-1731-8803

\bigskip\noindent
Egor A. Maximenko\newline
Instituto Polit\'{e}cnico Nacional\newline
Escuela Superior de F\'{i}sica y Matem\'{a}ticas\newline
Ciudad de M\'{e}xico\newline
Mexico\newline
email: egormaximenko@gmail.com, emaximenko@ipn.mx\newline
https://orcid.org/0000-0002-1497-4338

\bigskip\noindent
Gerardo Ramos-Vazquez\newline
Universidad Veracruzana\newline
Facultad de Matem\'{a}ticas\newline
Xalapa, Veracruz\newline
Mexico\newline
email: ger.ramosv@gmail.com\newline
https://orcid.org/0000-0001-9363-8043

\bigskip\noindent
Armando S\'{a}nchez-Nungaray\newline
Universidad Veracruzana\newline
Facultad de Matem\'{a}ticas\newline
Xalapa, Veracruz\newline
Mexico\newline
email: armsanchez@uv.mx\newline
https://orcid.org/0000-0001-6258-8477

\end{document}